\numberwithin{equation}{section} \numberwithin{table}{section}
\newtheorem{lemma}{Lemma}[section]
\newtheorem{prop}[lemma]{Proposition}
\newtheorem{thm}[lemma]{Theorem}
\newtheorem{cor}[lemma]{Corollary}
\theoremstyle{definition}
\newtheorem{defn}[lemma]{Definition}
\newtheorem{example}[lemma]{Example}
\theoremstyle{remark}
\newtheorem{rmk}[lemma]{Remark}
\newcommand{\de}{\delta}
\newcommand{\al}{\alpha}
\newcommand{\ga}{\gamma}
\newcommand{\e}{\varepsilon}
\newcommand{\om}{{\omega'}}
\newcommand{\BN}{\mathbb N}
\renewcommand{\epsilon}{\varepsilon}
\begin{document}

\title[On cycles for the doubling map]{On cycles for the doubling map \\ which are disjoint from an interval}
\author{Kevin G. Hare}
\address{Department of Pure Mathematics \\
University of Waterloo \\
Waterloo, Ontario \\
Canada N2L 3G1}
\thanks{Research of K. G. Hare supported, in part by NSERC of Canada.}
\email{kghare@uwaterloo.ca}
\author{Nikita Sidorov}
\address{School of Mathematics \\
The University of Manchester \\
Oxford Road, Manchester\\
 M13 9PL, United Kingdom.}
\email{sidorov@manchester.ac.uk}

\date{\today}
\subjclass[2010]{Primary 28D05; Secondary 37B10.} \keywords{Open dynamical system, doubling map, cycle.}

\begin{abstract}
Let $T:[0,1]\to[0,1]$ be the doubling map and let $0<a<b<1$. We say that an integer $n\ge3$ is bad for $(a,b)$ if all $n$-cycles for $T$ intersect $(a,b)$. Let $B(a,b)$ denote the set of all $n$ which are bad for $(a,b)$. In this paper we completely describe the sets:
\[
D_2=\{(a,b) : B(a,b)\,\text{is finite}\}
\]
and
\[
D_3=\{(a,b) : B(a,b)=\varnothing\}.
\]
In particular, we show that if $b-a<\frac16$, then $(a,b)\in D_2$, and if $b-a\le\frac2{15}$, then $(a,b)\in D_3$, both constants being sharp.
\end{abstract}

\maketitle

\section{Introduction and summary}

Let $T:[0,1]\to [0,1]$ be the doubling map, i.e.,
\begin{equation*}
Tx=\begin{cases} 2x, & x\in[0,1/2],\\
2x-1, & x\in(1/2, 1].
\end{cases}
\end{equation*}
Assume $0<a<b<1$ and put
\[
\mathcal J(a,b)=\{x\in[0,1] : T^nx\notin (a,b)\ \text{for all}\ n\ge0\}.
\]
In other words, $\mathcal J(a,b)$ is the set of all $x$ whose $T$-orbits are disjoint from $(a,b)$. Thus, $T|_{\mathcal J(a,b)}$ is what is usually referred to as an ``open map'' or a ``map with a hole''. It is obvious that $\{0,1\}\subset\mathcal J(a,b)$.

It is intuitively clear that if $b-a$ is ``small'', then $\mathcal J(a,b)$ is ``large'' and vice versa. Such claims in their precise quantitative form have been obtained in the recent papers \cite{GS13, SSC}. Specifically, the following two sets have been described in \cite{GS13}:
\[
D_0=\{(a,b)\in (1/4,1/2)\times(1/2,3/4) : \mathcal J(a,b)\neq\{0,1\}\}
\]
and
\[
D_1=\{(a,b)\in (1/4,1/2)\times(1/2,3/4) : \mathcal J(a,b)\ \text{is uncountable}\}.
\]
See Figure~\ref{fig:D0123} on page~\pageref{fig:D0123} for $D_0$ and $D_1$. The reason why we can confine ourselves to $(a,b)\in (1/4,1/2)\times(1/2,3/4)$ without losing anything interesting is the following result:

\begin{lemma} {\rm (}\cite[Lemma~1.1]{GS13}{\rm )}
\begin{enumerate}
\item If $a<1/4, b>1/2$ or $a<1/2, b>3/4$, then $\mathcal J(a,b)=\{0,1\}$.
\item If $b<1/2$ or $a>1/2$, then $\dim_H \mathcal J(a,b)>0$.
\end{enumerate}
\end{lemma}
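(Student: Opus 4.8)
The plan is to exploit the order-reversing conjugacy $T(1-x)=1-Tx$, which shows that $x\mapsto 1-x$ carries $\mathcal{J}(a,b)$ onto $\mathcal{J}(1-b,1-a)$; being an isometry, it preserves both the property ``$=\{0,1\}$'' and Hausdorff dimension. In part (i) this symmetry interchanges the two stated regions ($a<1/4,\,b>1/2$ versus $a<1/2,\,b>3/4$), so it suffices to treat the first; in part (ii) it interchanges $b<1/2$ and $a>1/2$, so there it suffices to treat $b<1/2$. Throughout I write $K=[0,a]\cup[b,1]$ for the complement of the hole, so that $\mathcal{J}(a,b)$ is exactly the set of points whose entire forward orbit stays in $K$, and I note that $0$ and $1$ are fixed and hence always survive.

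For (i), assume $a<1/4$ and $b>1/2$, so that $(a,b)\supset[1/4,1/2]$. First I would analyze survivors in $[0,a]$: if $x\in[0,a]$ survives then $Tx=2x\in[0,2a]$, and since $2a<1/2<b$ the only safe values there lie in $[0,a]$, forcing $2x\le a$; iterating gives $x\in\bigcap_k[0,a2^{-k}]=\{0\}$. Next, survivors in $[b,1]$: because $0$ has no preimage other than itself, the orbit of a nonzero survivor never reaches $0$, and hence by the previous step never enters $[0,a]\setminus\{0\}$, so it must remain in $[b,1]$ forever. There $T$ acts as the expanding map $x\mapsto 2x-1$, for which $1-T^nx=2^n(1-x)$ as long as the orbit stays above $1/2$; thus any starting point other than the fixed point $1$ eventually leaves $[b,1]$, landing in $[2b-1,b)$. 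That landing point is either inside the hole $(a,b)$ or a \emph{nonzero} point of $[0,a]$ (as $2b-1>0$), each a contradiction. Hence $\mathcal{J}(a,b)=\{0,1\}$.

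For (ii), assume $b<1/2$, so that $V:=[b,1]$ is disjoint from the hole and therefore safe, and I would build a two-branch iterated function system inside $V$. The short branch is $A:=[(1+b)/2,1]\subset[1/2,1]$, for which $T(A)=[b,1]=V$ in one step. For the long branch, fix $k$ large enough that $2^{-(k+1)}\le 1/2-b$ and set $B:=[1/2-2^{-(k+1)},1/2]$; then $T(B)=[1-2^{-k},1]$, and applying $x\mapsto 2x-1$ repeatedly gives $T^{1+j}(B)=[1-2^{-(k-j)},1]\subseteq[1/2,1]$ for $0\le j\le k-1$, all safe, while $T^{1+k}(B)=[0,1]\supseteq V$. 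The intervals $A$ and $B$ are disjoint subsets of $V$, every intermediate image lies in $K$, and the associated inverse branches $g_A,g_B$ contract $V$ into the disjoint pieces $A,B$ with ratios $2^{-1}$ and $2^{-(k+1)}$. The attractor $\Lambda$ of $\{g_A,g_B\}$ then satisfies the open set condition, is contained in $\mathcal{J}(a,b)$, and by the Moran equation has $\dim_H\Lambda=s>0$, where $2^{-s}+2^{-(k+1)s}=1$; this yields $\dim_H\mathcal{J}(a,b)>0$.

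The main obstacle is the construction of the long branch in (ii): a generic subinterval of $[b,1/2]$ expands under $T$ and will eventually straddle the hole, so one cannot simply push $[b,1/2]$ forward. The device that resolves this is to inject a \emph{tiny} subinterval of $[b,1/2]$ whose first image hugs the repelling fixed point $1$; since $b<1/2$ guarantees that the whole neighborhood $[1/2,1]$ of that fixed point is safe, the subsequent expanding images $[1-2^{-(k-j)},1]$ stay safe until the very step at which they cover all of $[0,1]$, cleanly jumping over the hole. Checking that every iterate of every point of $\Lambda$ avoids $(a,b)$ — that is, that the two itinerary blocks concatenate into genuinely surviving orbits — is the one point I would verify with care.
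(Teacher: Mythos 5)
Your proof is correct, but note that the paper itself contains no proof of this lemma: it is imported verbatim from \cite[Lemma~1.1]{GS13}, so there is no internal argument to compare yours against; what you have written is a legitimate self-contained derivation. For (i), the two-step analysis is sound: a survivor in $[0,a]$ is forced into $\bigcap_k[0,a2^{-k}]=\{0\}$ because $(a,2a]\subset(a,b)$; and a nonzero survivor in $[b,1]$ must eventually leave $[b,1]$ (since $1-T^nx=2^n(1-x)$ while the orbit stays above $1/2$), landing in $[2b-1,b)$, which is either inside the hole or a nonzero point of $(0,a]$, both contradictions. For (ii), the two-branch system with $g_A$ of ratio $2^{-1}$ and $g_B$ of ratio $2^{-(k+1)}$ (riding the repelling fixed point $1$ for $k$ steps) has disjoint images inside $[b,1]$, all intermediate iterates in $[1/2,1]\subset[b,1]$, hence the attractor lies in $\mathcal J(a,b)$ and Moran's equation gives a positive dimension; this is essentially the standard ``embed a subshift of finite type in the survivor set'' argument, phrased in IFS language. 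Two trivial caveats you should flag explicitly. First, with the paper's convention $T(1/2)=1$, the identity $T(1-x)=1-Tx$ fails at the single point $x=1/2$; this is harmless, because an orbit hitting $1/2$ is absorbed at the fixed point $1$, and both $0$ and $1$ lie outside every hole $(a,b)$ with $0<a<b<1$, so the reflection still carries $\mathcal J(a,b)$ exactly onto $\mathcal J(1-b,1-a)$ -- but it deserves a sentence, since you invoke the conjugacy for both halves of the lemma. Second, for the same reason $T^{k+1}(B)=(0,1]$ rather than $[0,1]$; this again costs nothing, since $(0,1]\supset[b,1]=V$ because $b>0$.
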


In the present paper we will be interested in cycles (i.e. finite orbits) for $T$ which are disjoint from an interval. Let us first introduce the following sets which are closely related to $D_0$ and $D_1$:
\[
D_0'=\{(a,b)\in (1/4,1/2)\times(1/2,3/4) : \exists\,\text{a non-trivial cycle disjoint from}\ (a,b)\}
\]
and
\[
D_1'=\{(a,b)\in (1/4,1/2)\times(1/2,3/4) : \exists\,\text{infinitely many cycles disjoint from}\ (a,b)\}
\]
(from here on by a ``cycle'' we will mean a prime $T$-cycle). In the definition of $D_0'$ we do not include the trivial 1-cycles of $\{0\}$ or $\{1\}$.
Note first that it is obvious that if $(a,b)\notin D_0$, then $(a,b)\notin D_0'$, whence $D_0'\subset D_0$. On the other hand, if $(a,b)$ is an interior point of $D_0$, then a cycle disjoint from $(a,b)$ exists -- see \cite[Theorem~2.7]{GS13}. Furthermore, the set $\{(a,a+1/4) : a\in\mathcal S\}$ is a subset of $D_0$ but it does not contain any cycles, and these are the only points on the boundary of $D_0$ with this property. (For the definition of $\mathcal S$ see Section~\ref{sec:D2}.) Hence
\[
D_0'\subsetneq D_0 \subsetneq \mathrm{cl}(D_0').
\]
where $\mathrm{cl}$ is the closure of the set. In particular, the interiors of $D_0$ and $D_0'$ coincide. Similarly, in view of \cite[Theorem~2.16]{GS13},
\[
D_1'\subsetneq D_1 \subsetneq \mathrm{cl}(D_1').
\]
Thus, the sets $D_0$ (resp. $D_1$) are ``almost'' the ones where for $(a,b)$ there are at least one (resp. infinitely many) disjoint cycles.

One may see this model as follows: take some large interval $(a,b)$ and begin shrinking it from both ends. At some point in time one gets a disjoint cycle, then infinitely many of those, and then (apparently!) for any $n$ there will be an $n$-cycle disjoint from $(a,b)$. This is analogous to the famous ``period three implies chaos'' statement -- see, e.g., \cite{LiY}; in fact, it is more than just an analogy, it is a generalization.

More precisely, if we assume $b=1-a$ (so our shrinking intervals are always symmetric about $1/2$), then it follows from the main result of \cite{ACS} that with the increase of $a$ towards $1/2$, the lengths of cycles disjoint from $(a,1-a)$ appear in exactly the classical Sharkovski\u{\i} order, with period~3 being indeed last to appear at $a=3/7$. The case we consider in the present paper is, generally speaking, asymmetric, so our first goal will be to determine what curve in the plane $(a,b)$ is a natural analogue of $3/7$ -- see Remark~\ref{rmk:D3} and Proposition~\ref{prop:exit} in Section~\ref{sec:D3}.

Note first that $n=2$ needs to be excluded, since there is only one 2-cycle, namely, $\{1/3, 2/3\}$, so one can take $(a,b)=(1/3-\e, 1/3+\e)$, and the 2-cycle is never disjoint from $(a,b)$, which is not particularly interesting.

To simplify our definitions, we say that an integer $n\ge3$ is {\em bad for $(a,b)$} if each $n$-cycle for $T$ has a non-empty intersection with $(a,b)$. Let $B(a,b)$ denote the set of all $n\ge3$ which are bad for $(a,b)$.
Put
\[
D_3=\{(a,b)\in (0,1)\times(0,1) : B(a,b)=\varnothing\}.
\]
Thus, $(3/7,4/7)\in D_3$.
Unlike the case of $D_0$ and $D_1$, there is some structure to $D_3$
    outside of $(1/4, 1/2) \times (1/2, 3/4)$.
We will show in Section~\ref{sec:D3} that this structure is very easily
    explained and all interesting structure will still lie within
    $(1/4, 1/2) \times (1/2, 3/4)$.
So, although the set is defined on the larger range $(0,1)\times(0,1)$,
    we will quite often restrict out attention to the range
    $(1/4, 1/2) \times (1/2, 3/4)$.
See Figure~\ref{fig:D3} for $D_3$, both on $(0,1)\times(0,1)$ and $(1/4,1/2)\times(1/2,3/4)$.

We will show in Section~\ref{sec:D3} that the boundary of $D_3$ is made up of a finite number of horizontal and vertical lines -- see
Figure~\ref{fig:D3}.

\begin{figure}
\includegraphics[width=190pt,height=237pt]{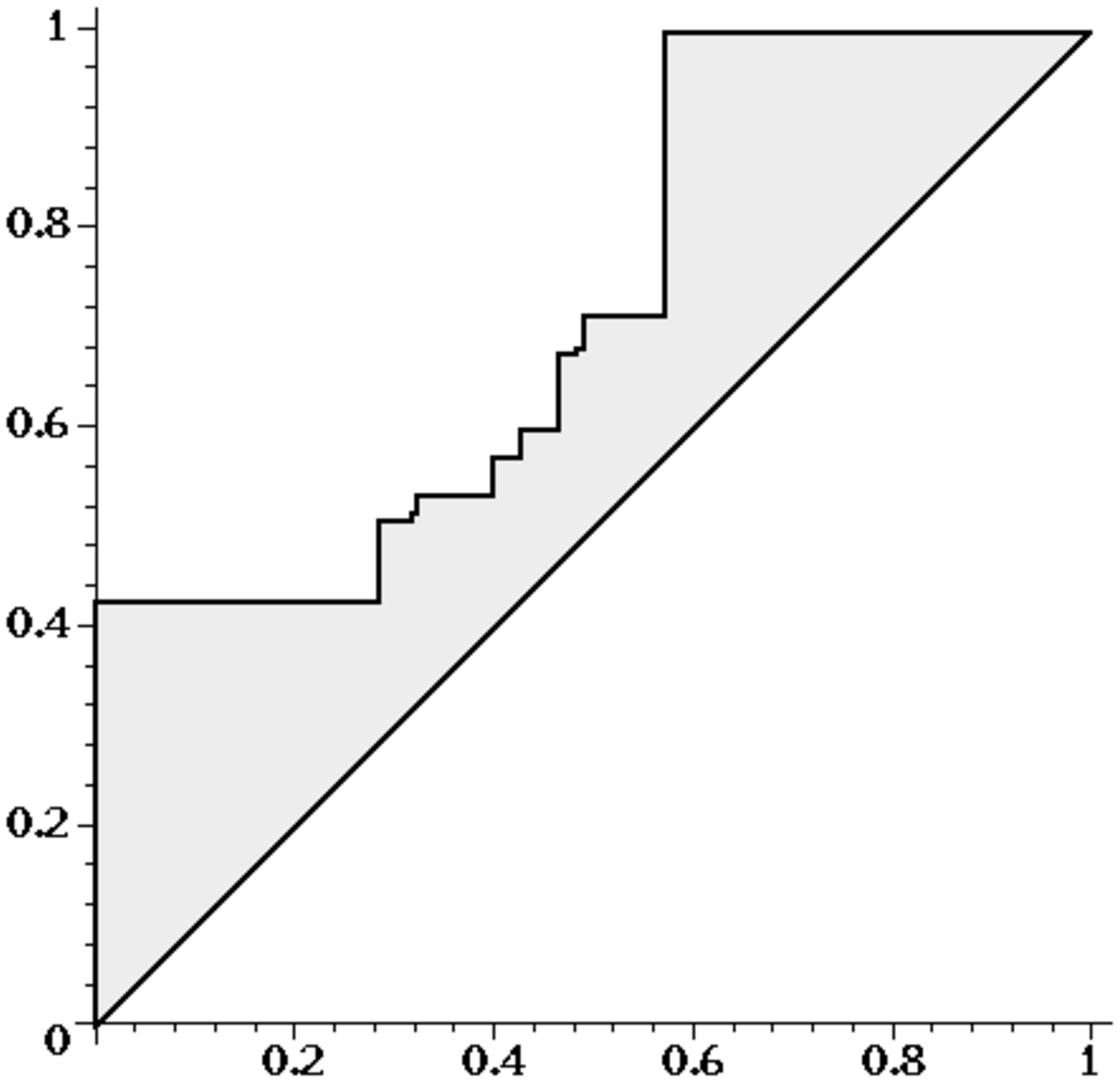} \ \ \ \  \ \  \  \
\includegraphics[width=190pt,height=237pt]{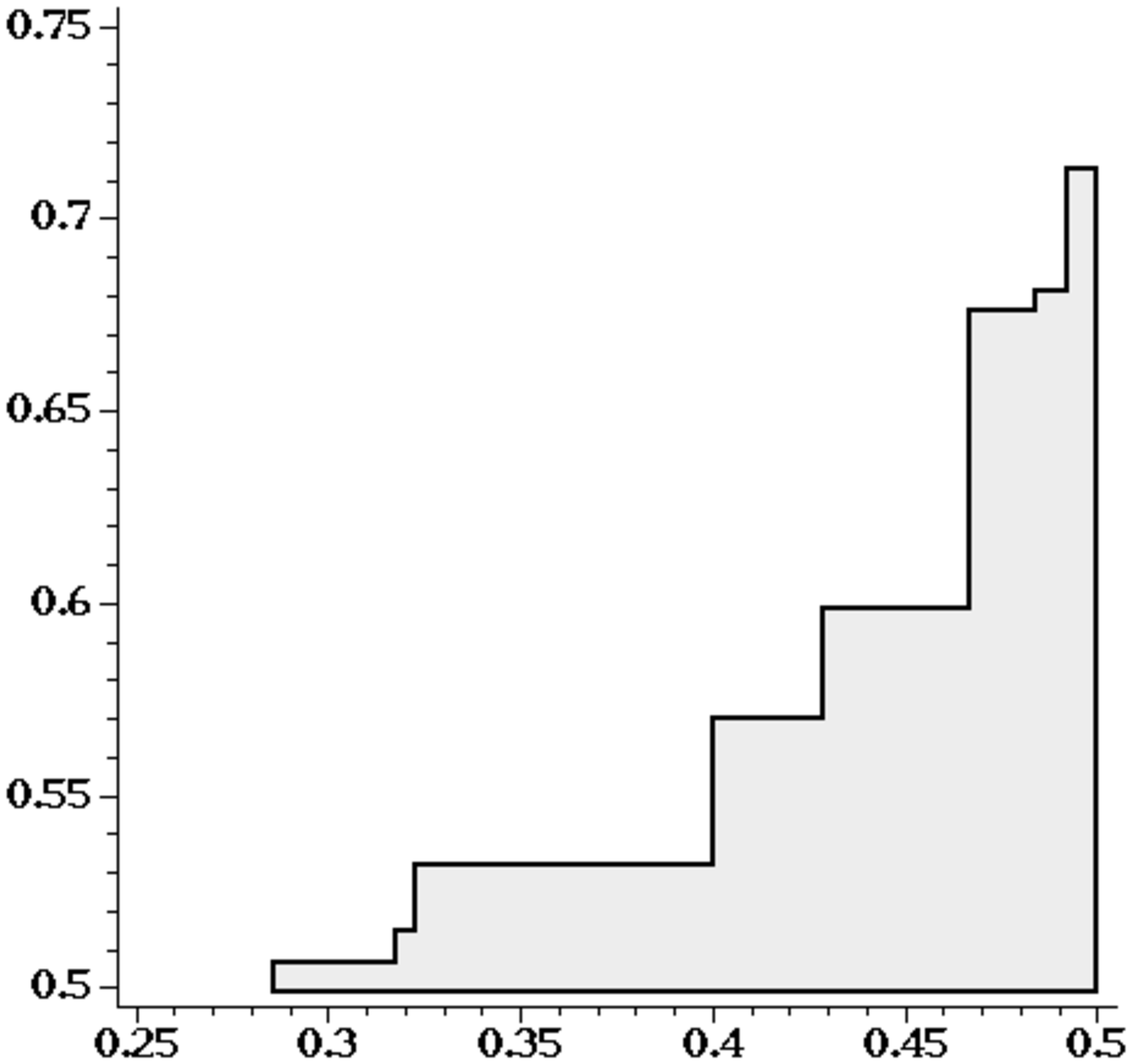}
\caption{The set $D_3$ in $(0,1)\times(0,1)$ and $(1/4,1/2)\times(1/2,3/4)$}
\label{fig:D3}
\end{figure}

There is another important milestone in the Sharkovski\u{\i} order, namely, the threshold below which all the even periods already exist but none of the odd ones does. In the symmetric model $b=1-a$ this milestone is $a=5/12$ whose binary expansion is $01(10)^\infty$ -- this follows immediately from \cite[Proposition~2.16]{ACS}, in which the critical values of $a$ for all the periods are computed. We introduce its natural analogue for the asymmetric case:
\[
D_2=\{(a,b)\in (1/4,1/2)\times(1/2,3/4) : B(a,b)\,\text{is finite}\}.
\]
Note that the restriction $(a,b)\in (1/4,1/2)\times(1/2,3/4)$ is, again, natural, since, as with $D_0$ and $D_1$, if $(a,b)$ contains $(1/4,1/2)$ or $(1/2, 3/4)$, there cannot be any disjoint cycles for $(a,b)$. Also, if $b<1/2$ or $a>1/2$, then $B(a,b)$ is always finite (or empty). Indeed, let $b<1/2$ (the case $a>1/2$ is completely analogous); here one can take $x=\frac{2^{n-1}-1}{2^n-1}$ and it is easy to check that it is a part of the $n$-cycle
    $\left\{\frac{2^n-2}{2^n-1}, \frac{2^n-3}{2^n-1}, \frac{2^n-5}{2^n-1}, \frac{2^n-9}{2^n-1}, \cdots, \frac{2^{n-1}-1}{2^n-1}\right\}$
which lies in $\bigl[\frac{2^{n-1}-1}{2^n-1}, 1\bigr]$, which is disjoint from $(a,b)$ for all sufficiently large $n$.

Although most of the boundary of $D_2$ is made up of horizontal and vertical lines, it is in fact made up of an infinite number of horizontal and vertical lines (one associated to each rational number), creating a kind of Devil's staircase. The precise structure of this set will be discussed in Section~\ref{sec:D2}. (See Figure \ref{fig:D2} as a shape of things to come.)

\begin{figure}
\includegraphics[width=250pt,height=313pt]{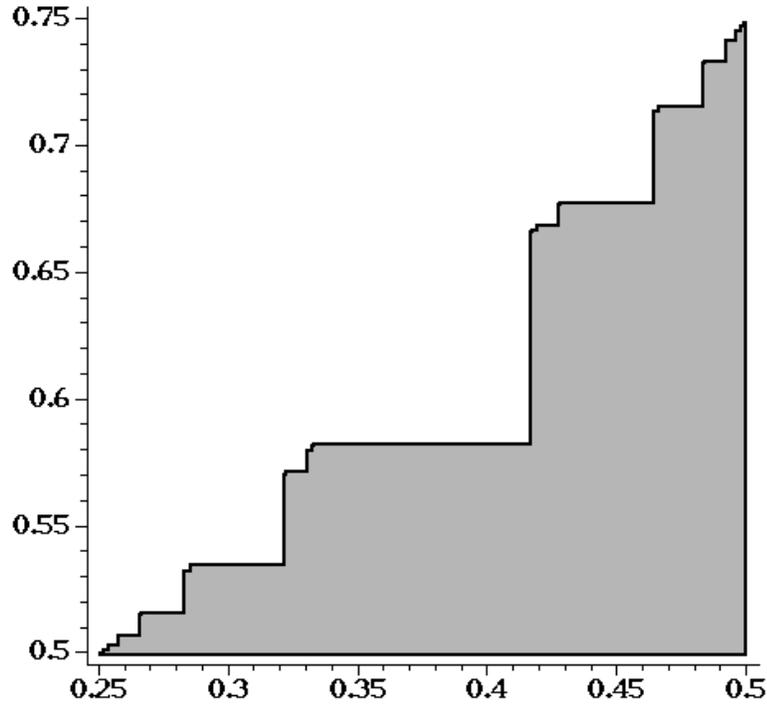}
\caption{The set $D_2$}
\label{fig:D2}
\end{figure}

Thus, one can say that while \cite{GS13} is about the initial part of the ``asymmetric Sharkovski\u{\i} order'' which generalizes the usual period doubling in three different ways (see \cite[Section~4.3]{GS13} for a detailed exposition), the present paper is about the ``final stretch'' of such orders, which generalizes the usual sequence of odd numbers in the reverse order.

Thus, the main reason why we believe a detailed study of the sets $D_2$ and $D_3$ is interesting is the fact that these sets are cornerstones of the generalized Sharkovski\u{\i} order, which appears to be an exciting object per se.

Regarding what happens ``in between'' -- generalizing the range in the Sharkovski\u{\i} order between getting all the powers of two and all the even numbers -- note that $\partial D_1$ and $\partial D_2$ have a substantial intersection (see Remark~\ref{rmk1} and Figure~\ref{fig:D0123} below). This means that for most patterns, shrinking $(a,b)$ results in simultaneously obtaining infinitely many disjoint cycles as well as finitely many bad $n$.

Note, in \cite{Alseda}, the authors studied for which $n$ does the Lorentz-like map have an $n$-cycle. Our case is somewhat different, because we are also interested in avoiding
    holes.

\section{The set $D_3$}
\label{sec:D3}

In this section we will show that the boundary of $D_3$ is composed of a finite number of horizontal and vertical lines.
We will give a precise description for the locations of these lines.

For any $(w_1,w_2,\dots)\in\{0,1\}^{\mathbb N}$ put
\[
x=\pi(w_1,w_2,\dots)=\sum_{j=1}^\infty w_j2^{-j},
\]
i.e., the dyadic (binary) expansion of $x$. From here on for the sake of notation we will not distinguish between the numbers in $[0,1]$ and their dyadic expansions.

Since we plan to work closely with 0-1 words, we need some definitions and basic results from combinatorics on words -- see \cite[Chapter~2]{Loth} for a detailed exposition. For any two finite words $u=u_1\dots u_k$ and $v=v_1\dots v_n$ we write $uv$ for their concatenation $u_1\dots u_k v_1\dots v_n$. In particular, $u^m=u\dots u$ ($m$ times) and $u^\infty=uuu\dots=\lim_{n\to\infty}u^n$, where the limit is understood in the topology of coordinate-wise convergence.
We will denote by $u^*$ the set of words
    $\{\lambda, u, u^2, u^3, u^4, \dots\}$, where $\lambda$ is the empty word.

From here on by a ``word'' we will mean a word whose letters are 0s and 1s. Let $w$ be a finite or infinite word.
We say that a finite or infinite word $u$ is {\em lexicographically smaller than} a word $v$ (notation: $u\prec v$) if either $u_1<v_1$ or there exists $n\ge1$ such that $u_i\equiv v_i$ for $i=1,\dots, n$ and $u_{n+1}<v_{n+1}$.
We notice that if $u \prec v$ then $\pi(u) \leq \pi(v)$ with equality only
    if $u = w 0 1^\infty$ and $v = w 1 0^\infty$ for some finite word $w$.

Recall $D_3 := \{(a,b) : B(a,b) = \emptyset\}$ where $B(a,b)$ is the set of
    bad $n$ for $(a,b)$.
We make two observations:
\begin{enumerate}
\item if $(a,b) \not\in D_3$ then $(a-\de, b+\e) \not \in D_3$
    for any non-negative $\e$ and $\de$; \label{ob:1}
\item if $(a,b)\in D_3$, then $(a+\de, b-\e)\in D_3$ for any
    non-negative $\e$ and $\de$.   \label{ob:2}
\end{enumerate}
We will show that $D_3$ has a very simple structure, namely that $D_3$'s boundary is composed of finitely many horizontal and vertical lines.

\begin{defn}\label{def:corner}
We will say that $(a,b)$ is a {\em corner of $D_3$} if
    for $(a', b')$ sufficiently close to $(a,b)$ we have
\begin{itemize}
\item if $a' > a$ or $b' < b$ then $(a', b') \in D_3$;
\item if $a' < a$ and $b' > b$ then $(a', b') \not\in D_3$.
\end{itemize}
\end{defn}

\begin{defn}\label{def:anti-corner}
We will say that $(a,b)$ is an {\em anti-corner of $D_3$} if
    for $(a', b')$ sufficiently close to $(a,b)$ we have
\begin{itemize}
\item If $a' > a$ and $b' < b$ then $(a', b') \in D_3$
\item If $a' < a$ or $b' > b$ then $(a', b') \not\in D_3$
\end{itemize}
\end{defn}

See Figure~\ref{fig:Corner} for a sketch. It is worth noting that we make no comment on $a' = a$ or $b' = b$.
Although $D_3$ is closed, $D_2$ is open, and we wish to reuse the
    definition of corner later on for $D_2$.
It is not clear a priori that $D_3$ will have corners and anti-corners as
    we have defined them.  For example $\{(x,y) : x > y\}$ does not have
    either.
We will show that in fact $D_3$ is made up of a finite number of corners
    and anti-corners.
Further we will show that these corners and anti-corners completely
    describes $D_3$.


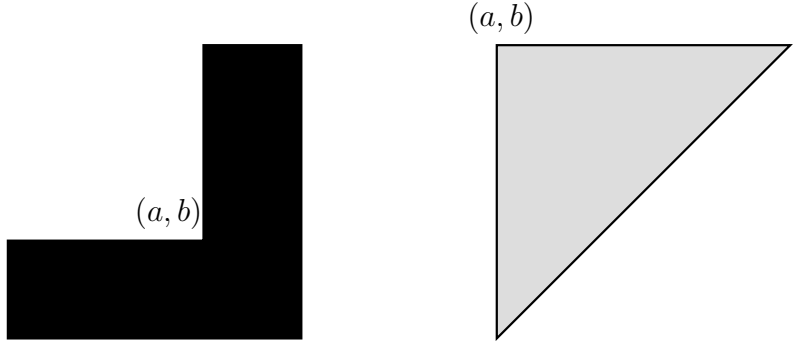
\begin{figure}[t]
    \centering \unitlength=1.3mm
    \texture{c0c0c0c0 0 0 0 0 0 0 0
        c0c0c0c0 0 0 0 0 0 0 0
        c0c0c0c0 0 0 0 0 0 0 0
        c0c0c0c0 0 0 0 0 0 0 0 }

    \centering \unitlength=1.3mm
    \begin{picture}(80,53)(0,-7)
            \thicklines

            \path(0,0)(0,10)(20,10)(20,30)(30,30)(30,0)(0,0)

            \shade\path(50,0)(50,30)(80,30)(50,0)

            \put(13,12){$(a,b)$}
            \put(47,32){$(a,b)$}


    \end{picture}
    \caption{A corner (left) and and an anti-corner (right)}
    \label{fig:Corner}
\end{figure}

\begin{thm}
\label{thm:D3 corner}
There are $9$ corners of $D_3$.
They are
\[
\begin{array}{rclrclrcl}
(a_1, b_1) & = &\left(\frac{2}{7}, \frac{3}{7}\right)  \hspace{10pt} &
(a_2, b_2) & = & \left(\frac{20}{63}, \frac{32}{63}\right) \hspace{10pt} &
(a_3, b_3) & = & \left(\frac{10}{31}, \frac{16}{31}\right) \\
(a_4, b_4) & = & \left(\frac{2}{5}, \frac{8}{15}\right) &
(a_5, b_5) & = & \left(\frac{3}{7}, \frac{4}{7}\right) &
(a_6, b_6) & = & \left(\frac{7}{15}, \frac{3}{5}\right) \\
(a_7, b_7) & = & \left(\frac{15}{31}, \frac{21}{31}\right) &
(a_8, b_8) & = & \left(\frac{31}{63}, \frac{43}{63}\right) &
(a_9, b_9) & = & \left(\frac{4}{7}, \frac{5}{7}\right) \\
\end{array}
\]
Letting $a_0 = 0$ and $b_{10} = 1$, and the $a_i$ and $b_i$ as above,
there are 10 anti-corners of $D_3$.  They are:
\[
(a_0, b_1), (a_1, b_2), (a_2, b_3), (a_3, b_4), (a_4, b_5),
(a_5, b_6), (a_6, b_7), (a_7, b_8), (a_8, b_9),  (a_9, b_{10}).
\]
\end{thm}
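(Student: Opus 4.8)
The plan is to translate the whole problem into combinatorics on the dyadic expansions and then to read off a single monotone step function whose corners are exactly the claimed points.

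First I would record the reduction to words. Since the doubling map acts as the shift on dyadic expansions, a prime $T$-cycle of length $n$ is the same thing as a primitive necklace $w$ of length $n$, its points being the values $\pi(\rho)$ of the cyclic rotations $\rho$ of $w$. When $(a,b)\in(1/4,1/2)\times(1/2,3/4)$ we have $1/2\in(a,b)$, so a rotation value avoids $(a,b)$ exactly when it is $\le a$ (forcing a leading $0$) or $\ge b$ (forcing a leading $1$). Hence the cycle of $w$ is disjoint from $(a,b)$ iff every rotation beginning with $0$ has value $\le a$ and every rotation beginning with $1$ has value $\ge b$. Writing $\mu_0(w)=\max\{\pi(\rho):\rho_1=0\}$ and $\mu_1(w)=\min\{\pi(\rho):\rho_1=1\}$, this reads: $n\notin B(a,b)$ iff some primitive $w$ of length $n$ has $\mu_0(w)\le a$ and $\mu_1(w)\ge b$.

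Using the monotonicity already recorded in observations (i)--(ii), $D_3$ is an up-set for the order ``$a$ larger, $b$ smaller'', so it is the region below the graph of the step function $\Phi(a)=\inf_{n\ge3}\phi_n(a)$, where $\phi_n(a)=\max\{\mu_1(w):|w|=n\text{ primitive},\ \mu_0(w)\le a\}$. The corners and anti-corners of Definitions~\ref{def:corner}--\ref{def:anti-corner} are precisely the reflex, respectively convex, corners of this staircase, and since one checks $D_3=\bigcup_i\{a\ge a_i,\ b\le b_{i+1}\}$ it suffices to (a) verify that each of the ten anti-corners lies in $D_3$, and (b) verify that immediately up-and-left of each of the nine claimed corners one leaves $D_3$. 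For (b), near a corner $(a_i,b_i)$ the binding constraint comes from a single short period: I would enumerate the primitive necklaces of the relevant length $\le 6$, read off the unique cycle with smallest $\mu_0$, and show that any enlargement of the interval makes that one length bad. This is a finite computation for each corner. Throughout I would exploit the involution $(a,b)\mapsto(1-b,1-a)$, induced by bit-complementation and preserving $D_3$, which pairs corner $i$ with corner $10-i$ and halves the work; the fringe corners $(2/7,3/7)$, $(4/7,5/7)$ and the extreme anti-corners $(0,3/7)$, $(4/7,1)$ live where $b<1/2$ or $a>1/2$, so there I would redo the (easier) gap analysis, since the separating gap need not straddle $1/2$, paying attention to the open/closed endpoints.

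The main obstacle is step (a): producing, for \emph{every} $n\ge3$ simultaneously, an $n$-cycle avoiding the interval at a given anti-corner, i.e. showing $\phi_n(a_{i-1})\ge b_i$ for all $n$ at once. The finiteness of the corner set rests entirely on this, namely on showing that no length $n\ge7$ ever pushes $\Phi$ below the staircase already cut out by lengths $3$--$6$. The extremal words at the anti-corners are periodic of small period (the $010,\ 0001,\ 00001,\dots$ patterns visible in the coordinates), and the task is to pad or concatenate them to every length $n$ --- typically splitting into residue classes of $n$ and using the $u^*$ / Lyndon-word machinery --- while keeping $\mu_0\le a_{i-1}$, $\mu_1\ge b_i$, and primitivity intact. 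Getting this padding to cover all residues of $n$ uniformly, and checking primitivity, is where the real care is needed.
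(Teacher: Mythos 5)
Your overall strategy coincides with the paper's: the monotonicity observations, the reduction of the corner (negative) direction to a finite enumeration of cycles of length at most $6$, the verification of the anti-corners by exhibiting an $n$-cycle avoiding the interval for every $n\ge3$, and the bit-complementation symmetry $(a,b)\mapsto(1-b,1-a)$ pairing corner $i$ with corner $10-i$ are exactly the ingredients of the paper's proof; your necklace/staircase formalism ($\mu_0$, $\mu_1$, $\Phi$) is an equivalent repackaging, and your corner half (the finite check) is the paper's Table~\ref{tab:Corner}.

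However, there is a genuine gap, and you have named it yourself: step (a) is never carried out. Producing, for each of the ten anti-corners and every $n\ge3$, a primitive $n$-cycle disjoint from the interval is the substance of the theorem -- it is what pins down the values $b_{i+1}$ and shows the list of corners is complete -- and in the paper it occupies Table~\ref{tab:Anti-corner}, with explicit families such as $(010)^\infty$, $(010(010)^*0)^\infty$, $(010(010)^*00)^\infty$, $(010(010)^*000)^\infty$ covering all residues modulo $3$ at the anti-corner $\left(\frac27,\frac{32}{63}\right)$, together with a worked verification of the lexicographic inequalities. This step is not a routine padding argument, because the family must be tailored to each anti-corner. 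For instance, at $(a_3,b_4)=\left(\frac{10}{31},\frac{8}{15}\right)$ the ``append zeros'' choice fails: the length-$8$ word $(01001000)^\infty$ has the rotation $(10000100)^\infty$, which is smaller than $(1000)^\infty=\frac8{15}$ (first difference at position $5$) yet larger than $\frac12>\frac{10}{31}$, so this rotation lands inside the hole; the paper must instead use the families $(010(010)^*10)^\infty$ and $(010(010)^*100)^\infty$ there. Until you exhibit such families for all ten anti-corners (five, up to symmetry) and verify the required inequalities and primitivity for every length, the theorem is not proved; everything else in your proposal is correct framing around this missing core.
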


\begin{cor}
If $b-a\le\frac{2}{15}$, then $(a,b)\in D_3$. On the other hand, if $(a,b)\in D_3$, then $b-a\le \frac37$.
\end{cor}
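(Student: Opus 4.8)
The plan is to deduce the corollary entirely from the combinatorial description of $D_3$ furnished by Theorem~\ref{thm:D3 corner}, so that it collapses to a finite optimization over the listed corners and anti-corners. The first step is to record the membership criterion that the corner/anti-corner data encode. Combining observations (i) and (ii) — which make $D_3$ closed under increasing $a$ and under decreasing $b$ — with the fact that the nine corners and ten anti-corners completely describe $D_3$, I would establish the clean reformulation
\[
(a,b)\in D_3 \iff \text{there is no } i\in\{1,\dots,9\} \text{ with } a<a_i \text{ and } b>b_i,
\]
i.e. the complement of $D_3$ is exactly $\bigcup_{i=1}^9\{(a,b): a<a_i,\ b>b_i\}$. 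Writing the abscissae in increasing order $0=a_0<a_1<\dots<a_9<a_{10}:=1$ (they are strictly increasing, as are the $b_i$), this says that for $a\in[a_k,a_{k+1})$ membership in $D_3$ is the single inequality $b\le b_{k+1}$ (and $b<1$ when $k=9$), the bounding value $b_{k+1}$ being the height of the anti-corner $(a_k,b_{k+1})$. Verifying this translation is the one genuinely structural point; everything after it is arithmetic.

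For the first assertion I would argue by contraposition. If $(a,b)\notin D_3$ then by the reformulation there is a corner with $a<a_i$ and $b>b_i$, whence
\[
b-a > b_i-a_i \ge \min_{1\le i\le9}(b_i-a_i)=\tfrac{2}{15},
\]
the minimum being attained at $(a_4,b_4)=(\tfrac25,\tfrac8{15})$ and $(a_6,b_6)=(\tfrac7{15},\tfrac35)$. Thus $b-a\le\tfrac2{15}$ forces $(a,b)\in D_3$. Sharpness of $\tfrac2{15}$ falls out of the same corner: the points $(\tfrac25-\delta,\tfrac8{15}+\delta)$ lie in the forbidden quadrant of $(a_4,b_4)$ yet have $b-a=\tfrac2{15}+2\delta$, arbitrarily close to $\tfrac2{15}$.

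For the second assertion I would bound $b-a$ on $D_3$ directly. If $a\ge a_9=\tfrac47$ then $b<1$ already gives $b-a<1-\tfrac47=\tfrac37$. Otherwise $a\in[a_k,a_{k+1})$ for some $0\le k\le 8$, and the reformulation gives $b\le b_{k+1}$, so
\[
b-a \le b_{k+1}-a_k,
\]
which is precisely the width of the anti-corner $(a_k,b_{k+1})$. A finite check of the ten anti-corner widths shows each is at most $\tfrac37$, the maximum $\tfrac37$ being attained only at the extreme anti-corners $(a_0,b_1)=(0,\tfrac37)$ and $(a_9,b_{10})=(\tfrac47,1)$. Hence $b-a\le\tfrac37$ throughout $D_3$; letting $a\to0^+$ and $b\to\tfrac37^-$ exhibits points of $D_3$ with $b-a$ arbitrarily close to $\tfrac37$, so this bound is sharp as well.

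The main obstacle, as flagged above, is not the arithmetic but this first step: being careful that the corner/anti-corner list really yields the quadrant characterization on all of $(0,1)\times(0,1)$, and in particular that $D_3$ genuinely fills the two extreme strips $a<a_1$ and $a\ge a_9$ in the manner dictated by the endpoint anti-corners $(0,\tfrac37)$ and $(\tfrac47,1)$. Once this monotone-staircase description is secured, both bounds are finite minimisations and maximisations of the quantities $b_i-a_i$ and $b_{k+1}-a_k$, and the values $\tfrac2{15}$ and $\tfrac37$ — together with their sharpness — emerge at once.
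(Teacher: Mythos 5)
Your proposal is correct and follows essentially the same route as the paper: the paper's one-line proof is exactly the finite optimization you carry out, namely $\min_i(b_i-a_i)=b_4-a_4=\frac{2}{15}$ over the corners and $\max_i(b_{i+1}-a_i)=b_1-a_0=\frac{3}{7}$ over the anti-corners, with the quadrant/staircase description of $D_3$ (which you spell out explicitly) left implicit as a consequence of Theorem~\ref{thm:D3 corner} and the monotonicity observations. Your added care about the boundary strips and sharpness is sound but not a different method.
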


\begin{proof}
This follows by noticing that $\min(b_i-a_i) = b_4-a_4 = \frac{2}{15}$
    and
$\max(b_{i+1}-a_i) = b_1-a_0 = \frac{3}{7}$.
\end{proof}

\begin{proof}[Proof of Theorem~\ref{thm:D3 corner}]
We will show
\begin{itemize}
\item $(a_i - \e, b_i+\e) \not \in D_3$ for all $\e>0$,
    by explicitly finding an $n$ such that all $n$-cycles
    intersect $(a_i - \e, b_i + \e)$.
\item $(a_i, b_{i+1}) \in D_3$ by explicitly giving for each $n \geq 3$ an
    $n$-cycle that avoids $(a_i, b_{i+1})$.
\end{itemize}
These two claims, combined with the observations~\ref{ob:1} and \ref{ob:2} on page~\pageref{ob:1}, prove Theorem~\ref{thm:D3 corner}.
To see this, assume that we have shown for some fixed $i$ the
\begin{enumerate}[(a)]
\item $(a_i-\e, b_i+\e) \not \in D_3$,
\label{pnt1}
\item $(a_i, b_{i+1}) \in D_3$.
\label{pnt2}
\item $(a_{i+1}-\e, b_{i+1}+\e) \not \in D_3$ and
\label{pnt3}
\end{enumerate}
We see from \eqref{pnt2} above, and observation \ref{ob:2} that for all
    $a' > a_i$ and $b' < b_{i+1}$ that $(a', b') \in D_3$.
We see from \eqref{pnt1} above, and observation \ref{ob:1} that for all
    $a' < a_i$ and $b' > b_i$ that $(a', b') \not \in D_3$.
As $b_i < b_{i+1}$, this shows that for $b'$ sufficiently close to $b_i$
    and $a' < a_i$ that $(a', b') \not \in D_3$.
Similarly, we see that for $a'$ sufficiently close to $a_i$ and $b' > b_{i+1}$
    we have that $(a', b') \not \in D_3$.
This shows that $(a_i, b_{i+1})$ is an anti-corner.

In a similar fashion, the two claims at the start of the proof would show that
    each $(a_i, b_i)$ is a corner.

We will show that $D_3$ has a very simple structure, namely that $D_3$'s boundary is composed of finitely many horizontal and vertical lines.
To see this we see that if we show all of the points $(a_i, b_i)$ are
    corners and $(a_i, b_{i+1})$ are anti-corners, we see that the line
    from $(a_i, b_i)$ to $(a_i, b_{i+1})$ is on the boundary of $D_3$.
Similarly the line from $(a_i, b_i)$ to $(a_{i-1}, b_i)$ is also on
    the boundary of $D_3$.
This in turn shows that there cannot be any other corners or anti-corners,
    which proves the result.

The first part is demonstrated in Table~\ref{tab:Corner}.
Here we give $(a_i, b_i)$, the $n$ for which all $n$-cycles intersect
    $(a_i-\e, b_i+\e)$.
For $i = 1, 2, \dots, 5$ we also give all of the $n$-cycles.
For each $n$-cycle we indicate  in bold
    the term in the orbit that intersects $(a_i-\e, b_i+\e)$.
Note that in some cases there are multiple terms.

Consider, for instance, the special case of showing $\left(\frac27-\e, \frac37+\e\right) \not\in D_3$.
We see that there are only two different $3$-cycles.
One of these $3$-cycles contains $\frac27$ and the other contains $\frac37$.
Hence this interval will always contain a $3$-cycle and hence is not
    in $D_3$.

{\footnotesize
\begin{table}
\begin{tabular}{lll}
Corner & $n$ & $n$-cycles \\ \hline
$(\frac{2}{7}, \frac{3}{7})$ &
    3 &
    $\{1/7, \mathbf{2/7}, 4/7\}$ \\ & &
    $\{\mathbf{3/7}, 5/7, 6/7\}$ \\ \\

$(\frac{20}{63}, \frac{32}{63})$ &
    6 &
    $\{5/63, 10/63, 17/63, \mathbf{20/63}, {34/63}, 40/63\}$ \\ & &
    $\{11/63, \mathbf{22/63}, \mathbf{25/63}, {37/63}, 44/63, 50/63\}$ \\ & &
    $\{1/21, 2/21, 4/21, \mathbf{8/21}, {11/21}, 16/21\}$ \\ & &
    $\{13/63, 19/63, \mathbf{26/63}, {38/63}, 41/63, 52/63\}$ \\ & &
    $\{1/9, 2/9, \mathbf{4/9}, {5/9}, 7/9, 8/9\}$ \\ & &
    $\{\mathbf{23/63}, \mathbf{29/63}, {43/63}, 46/63, 53/63, 58/63\}$ \\ & &
    $\{5/21, \mathbf{10/21}, {13/21}, 17/21, 19/21, 20/21\}$ \\ & &
    $\{\mathbf{31/63}, 47/63, 55/63, 59/63, 61/63, 62/63\}$  \\ &&
    $\{1/63, 2/63, 4/63, 8/63, 16/63, \mathbf{32/63}\}$ \\  \\

$(\frac{10}{31},  \frac{16}{31})$ &
    5 &
    $\{5/31, 9/31, \mathbf{10/31}, 18/31, 20/31\}$ \\ & &
    $\{3/31, 6/31, \mathbf{12/31}, 17/31, 24/31\}$ \\ & &
    $\{\mathbf{11/31}, \mathbf{13/31}, 21/31, 22/31, 26/31\}$ \\ & &
    $\{7/31, \mathbf{14/31}, 19/31, 25/31, 28/31\}$ \\ & &
    $\{\mathbf{15/31}, 23/31, 27/31, 29/31, 30/31\}$ \\ & &
    $\{1/31, 2/31, 4/31, 8/31, \mathbf{16/31}\}$ \\  \\

$(\frac{2}{5}, \frac{8}{15})$ &
    4 &
    $\{1/5, \mathbf{2/5}, 3/5, 4/5\}$ \\ & &
    $\{\mathbf{7/15}, 11/15, 13/15, 14/15\}$ \\ &&
    $\{1/15, 2/15, 4/15, \mathbf{8/15}\}$ \\ \\

$(\frac{3}{7}, \frac{4}{7})$ &
    3 &
    $\{\mathbf{3/7}, 5/7, 6/7\}$ \\  &&
    $\{1/7, 2/7, \mathbf{4/7}\}$ \\  \\
$(\frac{7}{15}, \frac{3}{5})$ &
    4& as above \\ \\

$(\frac{15}{31}, \frac{21}{31})$ &
    5 & as above\\ \\

$(\frac{31}{63}, \frac{43}{63})$ &
    6 & as above\\ \\

$(\frac{4}{7}, \frac{5}{7})$ &
    3 & as above\\ \\

\end{tabular}
\caption{Proof that $(a_i, b_i)$ is on the boundary of $D_3$}
\label{tab:Corner}
\end{table}

}
To see that $(a_i, b_{i+1})$ is in $D_3$ we must show, for all
    $n \geq 3$, how to construct an $n$-cycle that avoids $(a_i, b_{i+1})$.
These results are summarized in Table \ref{tab:Anti-corner}.
We will consider only one of these cases in detail, all of the rest are
    equivalent.
The second half of Table \ref{tab:Anti-corner} comes by replacing
    all of the $0$s with $1$s and all of the $1$s with $0$s in the
    first half of the table.

Consider the special case of finding a $7$-cycle that avoids
    $(a_1, b_2) = \left(\frac27, \frac{32}{63}\right)$.
We see that $(0100100)^\infty$, a special case of $(010 (010)^* 0)^\infty$,
    is a $7$-cycle.
We see that the $7$ terms in the orbit of $(0100100)^\infty$ are
   \[ (010 010 0)^\infty,
   (10 010 0 0)^\infty,
   (0 010 0 01)^\infty,
   (010 0 010)^\infty,
   (10 0 010 0)^\infty,
   (0 0 010 01)^\infty,
   (0 010 010)^\infty. \]
By looking at the dyadic expansions, we see that the first, third, fourth,
    sixth and seventh term are all strictly less that $\frac27$, whereas the
    second and fifth term are strictly larger than $\frac{32}{63}$.

{\footnotesize

\begin{table}
\begin{tabular}{lll}
$a$                      & $b$                      & $c$ \\ \hline
$0 = (0)^\infty$          & $3/7 = (011)^\infty$      & $(011(1)^*)^\infty$ \\ \\
$2/7 = (010)^\infty$      & $32/63 = (100000)^\infty$ &
       $(010)^\infty$ \\
    && $(010(010)^*0)^\infty $ \\
    && $(010(010)^*00)^\infty $ \\
    && $(010(010)^*000)^\infty $ \\ \\
$20/63 = (010100)^\infty$ & $16/31 = (10000)^\infty$  &
       $(010)^\infty$ \\
    && $(010(010)^*0)^\infty $ \\
    && $(010(010)^*00)^\infty $ \\
    && $(010(010)^*000)^\infty $ \\ \\
$10/31 = (01010)^\infty$  & $8/15 = (1000)^\infty$    &
       $(010)^\infty$ \\
    && $(010(010)^*0)^\infty $ \\
    && $(010(010)^*10)^\infty $ \\
    && $(010(010)^*100)^\infty $ \\ \\
$2/5 = (0110)^\infty$     & $4/7 = (100)^\infty$      &
       $(01(01)^*0)^\infty$ \\
    && $(01(01)^*10)^\infty$ \\ \\
$3/7 = (011)^\infty$      & $3/5 = (1001)^\infty$     &
    By symmetry \\ \\
$7/15 = (0111)^\infty$    & $21/31 = (10101)^\infty$  &
    By symmetry \\ \\
$15/31 = (01111)^\infty$  & $43/63 = (101011)^\infty$ &
    By symmetry \\ \\
$31/63 = (011111)^\infty$ & $5/7 = (101)^\infty$      &
    By symmetry \\ \\
$4/7 = (100)^\infty$      & $1 = 10^\infty$         &
    By symmetry \\ \\
\end{tabular}
\caption{Proof that $(a_i, b_{i+1})$ is an anti-corner}
\label{tab:Anti-corner}
\end{table}
}

This proves the result that $(a_i, b_i)$ for $i = 1, 2, \dots 9$,
    form the corners of $D_3$ and
    $(a_i, b_{i+1})$ for $i = 0, 1, \dots, 9$ form the anti-corners.
\end{proof}

\begin{defn}\label{def:EP}We say that $n\ge3$ is an {\em exit period} if there exists a continuous family of intervals $(a_\al, b_\al)_{\al\in[\al_0,\al_1]}$ such that
\begin{itemize}
\item $(a_\al, b_\al)\subsetneq (a_{\al'}, b_{\al'})$ if $\al>\al'$;
\item $B(a_{\al_1}, b_{\al_1})=\varnothing$;
\item $n$ is bad for $(a_\al,b_\al)$ for any $\al<\al_1$.
\end{itemize}
We denote the set of exit periods by $EP$.
\end{defn}

\begin{prop}\label{prop:exit}
We have $EP=\{3,4,5,6\}$.
\end{prop}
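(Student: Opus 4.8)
The plan is to prove the two inclusions $\{3,4,5,6\}\subseteq EP$ and $EP\subseteq\{3,4,5,6\}$ separately, using the complete description of $\partial D_3$ furnished by Theorem~\ref{thm:D3 corner}. For the inclusion $\{3,4,5,6\}\subseteq EP$, I would read off from Table~\ref{tab:Corner} a corner $(a_i,b_i)$ whose associated period equals the given $m$: for instance $m=3$ at $(2/7,3/7)$, $m=4$ at $(2/5,8/15)$, $m=5$ at $(10/31,16/31)$, and $m=6$ at $(20/63,32/63)$. For such a corner put $a_\al=a_i-\e(\al)$ and $b_\al=b_i+\e(\al)$, with $\e$ continuous, strictly decreasing, and $\e(\al_1)=0$; this is a strictly nested family shrinking to $(a_i,b_i)$. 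Since $D_3$ is closed the corner lies in $D_3$, so $B(a_i,b_i)=\varnothing$; and Table~\ref{tab:Corner} shows that for every $\e>0$ all $m$-cycles meet $(a_i-\e,b_i+\e)$, so $m$ is bad for each $\al<\al_1$. Hence $m$ is an exit period, and all of $3,4,5,6$ arise this way.

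For the reverse inclusion, let $n\in EP$ with a family shrinking to $p=(a^*,b^*)$. Whenever $n$ is bad we have $B\ne\varnothing$, so every member of the family with $\al<\al_1$ lies outside $D_3$; as $B(p)=\varnothing$ we get $p\in D_3$, and therefore $p\in\partial D_3$. The whole reverse inclusion then rests on one \emph{Key claim}: for every $n\ge7$ and every $(a,b)\in\partial D_3$ there is an $n$-cycle lying strictly in $[0,a]\cup[b,1]$, i.e.\ avoiding the \emph{closed} interval $[a,b]$ with positive slack. Granting this, the cycle also avoids all nearby larger holes $(a_\al,b_\al)$ with $\al$ close to $\al_1$, so $n$ is good there, contradicting the requirement that $n$ be bad for every $\al<\al_1$. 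Thus no $n\ge7$ is an exit period, and combined with the first inclusion this gives $EP=\{3,4,5,6\}$.

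It remains to establish the Key claim, which I expect to be the main obstacle. Because positive slack is inherited by every smaller hole (the obvious per-period monotonicity behind observations~\ref{ob:1}--\ref{ob:2}), it suffices to verify the claim at the ten anti-corners: each boundary point has a small up-left neighbourhood contained in the up-left quadrant of a neighbouring anti-corner, whose larger hole an anti-corner cycle already avoids. At each anti-corner I would use the explicit families recorded in Table~\ref{tab:Anti-corner} -- the words $(010(010)^*0\cdots)^\infty$ and their $0\leftrightarrow1$ complements -- which produce, for every length $n\ge3$, an orbit avoiding the \emph{open} anti-corner hole. The one remaining point is to check that for $n\ge7$ these orbits avoid the \emph{closed} hole, i.e.\ that no orbit point coincides with an endpoint: for the short period attached to the anti-corner the extreme word (e.g.\ $(010)^\infty=2/7$) sits exactly on the boundary -- which is precisely why that short period, and not $n$, is the exit period there -- whereas for $n\ge7$ the corresponding word is lexicographically strictly interior. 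Quantifying this last comparison through the observation that $u\prec v$ implies $\pi(u)\le\pi(v)$ with equality only in the dyadic-tie case $u=w01^\infty,\ v=w10^\infty$ is the technical heart of the argument; once the strict inequalities are confirmed across all families and all $n\ge7$, the proof is complete.
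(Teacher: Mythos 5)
Your proposal is correct and takes essentially the same route as the paper: the inclusion $\{3,4,5,6\}\subseteq EP$ is obtained from the corners via Table~\ref{tab:Corner}, and the reverse inclusion by placing the limit interval on $\partial D_3$ and using the explicit cycle families of Table~\ref{tab:Anti-corner} to show that any hole slightly larger than a boundary hole admits disjoint $n$-cycles for every $n\ge7$. Your write-up merely spells out two steps that the paper compresses into the single sentence ``it follows from the proof of Theorem~\ref{thm:D3 corner}\dots'', namely the reduction of an arbitrary boundary point to the ten anti-corners and the observation that for $n\ge7$ the tabulated cycles avoid the \emph{closed} anti-corner holes (with positive slack), which is exactly what the contradiction requires.
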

\begin{proof}It is clear from Definition~\ref{def:EP} that $(a_{\al_1}, b_{\al_1})$ must belong to the boundary of $D_3$. It follows from the proof of Theorem~\ref{thm:D3 corner} that for any $\e>0$ there exists $(a,b)$ at a distance $\e$ from $\partial D_3$ such that $B(a,b)\subset\{3,4,5,6\}$. Hence $EP\subset\{3,4,5,6\}$.

To prove that $\{3,4,5,6\} \subset EP$, it suffices to show that for any $n\in\{3,4,5,6\}$ the corresponding corner is indeed $(a_{\al_1}, b_{\al_1})$ for some family of intervals satisfying Definition~\ref{def:EP}. This is a simple check; for instance, for the interval $\left(\frac{20}{63},\frac{32}{63}\right)$ we have that any 6-cycle which does not include its endpoints contains two consecutive 1s in its dyadic expansions and thus, must intersect it. Hence $6\in EP$. The cases of $n=3,4,5$ are similar.
\end{proof}

\begin{rmk}\label{rmk:D3}
The only symmetric point on the boundary of $D_3$, $\left(\frac37,\frac47\right)$, corresponds to the appearance of period~3 in the classical Sharkovski\u{\i} order -- see Introduction. We see thus that $\partial D_3$ can be perceived as a generalization of period~3 to our asymmetric case.
\end{rmk}

\section{The set $D_2$}
\label{sec:D2}

Similar to the boundary of set $D_3$, the boundary of the set $D_2$ consists of horizontal and vertical line segments.
Unlike $D_3$ though, the boundary of this set consists of an infinite number
    of such segments.
Definitions~\ref{def:corner} of corners remains.
Definition~\ref{def:anti-corner} of anti-corners is not relevant in
    this case (although this is not immediately obvious).
If we consider a horizontal line in $D_3$, we see that
    the right end of this line is a corner and
    the left end of this line is an anti-corner.
In the case of $D_2$, the right end of this line is again a corner and the
    left end of this line is a limit of corners, and comes from a kind of
    Devil's staircase construction.
We will make this rigorous in Proposition~\ref{prop:Devil}.

Before discussing the result in detail, we must first introduce some
    additional notation from the combinatorics on words.
We say that a finite  word $u$ is a {\em factor of} $w$ if there exists $k$ such that $u=w_k\dots w_{k+n}$ for some $n\ge0$. For a finite word $w$ let $|w|$ stand for its length and $|w|_1$ stand for the number of 1s in $w$. The 1-{\em ratio} of $w$ is defined as $|w|_1/|w|$. For an infinite word $w_1w_2\dots$ the 1-ratio is defined as $\lim_{n\to\infty}|w_1\dots w_n|_1/n$ (if it exists).

We say that a finite or infinite word $w$ is {\em balanced} if for any $n\ge1$ and any two factors $u,v$ of $w$ of length~$n$ we have $||u|_1-|v|_1|\le1$. 
A finite word $w$ is {\em cyclically balanced} if all of its cyclic permutations are balanced. (And therefore, $w^\infty$ is balanced.) It is well known that if $u$ and $v$ are two cyclically balanced words with $|u|=|v|=q$ and $|u|_1=|v|_1=p$ and $\gcd(p,q)=1$, then $u$ is a cyclic permutation of $v$. Thus, there are only $q$ distinct cyclically balanced words of length $q$ with $p$ 1s.

A finite word $w$ which begins with 0 is called 0-{\em max} if it is larger than any of its cyclic permutations beginning with 0. A finite word is called 1-{\em min} if it smaller than any if its cyclic permutations beginning with 1. Similarly, an infinite word $w=w_1w_2\dots$ with $w_1=0$ is 0-{\em max} if $(w_{k+1}, w_{k+2},\dots)\prec w$ for any $k\ge1$ such that $w_{k+1}=0$. An infinite word $w=w_1w_2\dots$ with $w_1=1$ is 1-{\em min} if $(w_{k+1}, w_{k+2},\dots)\succ w$ for any $k\ge1$ such that $w_{k+1}=1$.

For any $r=p/q\in\mathbb Q\cap(0,1)$ we define two words as follows: $s(r)$ is the lexicographically largest cyclically balanced word of length~$q$ with 1-ratio $r$ beginning with 0, and $t(r)$ is the lexicographically smallest cyclically balanced word of length~$q$ with 1-ratio $r$ beginning with 1. In particular, $s(r)$ is 0-max and $t(r)$ is 1-min.


Note that there is an explicit way to construct $s(r)$ and $t(r)$ for any given $r$. Namely, let $r=p/q\le1/2$ have a continued fraction expansion $[d_1+1,\dots,d_n]$ with $d_n\ge2$ and $d_1\ge1$ (in view of $r\le1/2$). We define the sequence of 0-1 words given by $r$ as follows: $u_{-1}=1, u_0=0,
u_{k+1}=u_k^{d_{k+1}}u_{k-1}, \ k\ge0$. The word $u_n$ has length~$q$ and is called the $n$th {\em standard word} given by $r$. Given an irrational $\gamma\in(0,1/2)$ with the continued fraction expansion $\ga=[d_1+1,d_2,\dots]$, the word $u_\infty$ defined as the limit of the $u_n$ is called the {\em characteristic word} given by $\ga$.

Let $w_1\dots w_q:=u_n$. Then
\begin{equation}\label{eq:srtr}
s(r)=01w_1\dots w_{q-2},\ t(r)=10w_1\dots w_{q-2}.
\end{equation}
For $r\in\mathbb Q\cap(1/2,1)$ we have $s(r)=\overline{t(1-r)}, \ t(r)=\overline{s(1-r)}$, where $\overline 0=1, \overline 1=0$, and $\overline{w_1w_2}=\overline{w_1}\,\overline{w_2}$ for any two words $w_1, w_2$.

\begin{example}We have $s(2/5)=01010,\ t(2/5)=10010, \ s(3/5)=01101,\ t(3/5)=10101$.
\end{example}

\begin{lemma}\label{lem:farey}
Assume $r_1=p_1/q_1$ and $r_2=p_2/q_2$ to be Farey neighbours with $r_1<r_2$ and $r_2 \leq 1/2$, i.e., with $p_2q_1-p_1q_2=1$. Put
\[
r_3:=\frac{p_1}{q_1}\oplus \frac{p_2}{q_2} = \frac{p_1+p_2}{q_1+q_2},
\]
i.e., $r_3$ is the mediant of $r_1$ and $r_2$. Put $s_i=s(r_i), t_i=t(r_i)$ for $i=1,2,3$. Then we have
\begin{equation}\label{eq:concaten1}
s_3=s_2s_1,
\end{equation}
\begin{equation}\label{eq:concaten2}
t_3=t_1t_2,
\end{equation}
\begin{equation}\label{eq:concaten3}
s_3=s_1t_2
\end{equation}
and
\begin{equation}\label{eq:concaten4}
t_3=t_2s_1.
\end{equation}
\end{lemma}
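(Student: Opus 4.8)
The plan is to pass from the words $s_i,t_i$ to their associated \emph{central words} and then recognise the four identities as two instances of the classical standard factorisation of Christoffel words. Write the $n$th standard word attached to $r_i$ as $u^{(i)}=w^{(i)}_1\cdots w^{(i)}_{q_i}$ and set $\tilde u_i:=w^{(i)}_1\cdots w^{(i)}_{q_i-2}$, its prefix of length $q_i-2$. Since $r_1<r_2\le\frac12$ forces $r_3<\frac12$, all three words arise directly from the construction preceding \eqref{eq:srtr} (no complementation is needed), and \eqref{eq:srtr} reads
\[
s_i=01\,\tilde u_i,\qquad t_i=10\,\tilde u_i\qquad(i=1,2,3).
\]
Substituting these into \eqref{eq:concaten1}--\eqref{eq:concaten4} and cancelling the common two-letter prefix on each side, all four identities collapse to the two statements
\begin{equation}\label{eq:cw1}
\tilde u_3=\tilde u_2\,01\,\tilde u_1
\end{equation}
and
\begin{equation}\label{eq:cw2}
\tilde u_3=\tilde u_1\,10\,\tilde u_2 ;
\end{equation}
indeed \eqref{eq:concaten1} and \eqref{eq:concaten4} reduce to \eqref{eq:cw1}, while \eqref{eq:concaten2} and \eqref{eq:concaten3} reduce to \eqref{eq:cw2}. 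A length count, $|\tilde u_2\,01\,\tilde u_1|=(q_2-2)+2+(q_1-2)=q_3-2=|\tilde u_3|$, confirms the reduction is dimensionally consistent.

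For \eqref{eq:cw1}--\eqref{eq:cw2} I would invoke the standard theory of central and Christoffel words (see \cite[Chapter~2]{Loth}). There $\tilde u_i$ is exactly the central word of slope $r_i$: it is a palindrome, the standard word factors as $u^{(i)}=\tilde u_i\,xy$ with $xy\in\{01,10\}$, and the lower and upper Christoffel words of slope $r_i$ are $C_\ell(r_i)=0\,\tilde u_i\,1$ and $C_u(r_i)=1\,\tilde u_i\,0$. The one substantial input is the standard factorisation of Christoffel words: for Farey neighbours $r_1<r_2$ with mediant $r_3$ one has
\[
C_\ell(r_3)=C_\ell(r_1)\,C_\ell(r_2),\qquad C_u(r_3)=C_u(r_2)\,C_u(r_1).
\]
Writing $C_\ell(r_i)=0\tilde u_i1$ in the first relation and deleting the outer $0$ and $1$ gives $\tilde u_3=\tilde u_1\,10\,\tilde u_2$, i.e. \eqref{eq:cw2}; doing the same with $C_u(r_i)=1\tilde u_i0$ in the second gives $\tilde u_3=\tilde u_2\,01\,\tilde u_1$, i.e. \eqref{eq:cw1}. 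Combined with the first step, this proves all of \eqref{eq:concaten1}--\eqref{eq:concaten4}.

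The main obstacle is the Christoffel standard factorisation itself, the only non-bookkeeping ingredient. If one prefers not to quote it, I would prove \eqref{eq:cw1}--\eqref{eq:cw2} directly by induction on the Stern--Brocot tree, i.e. on $q_1+q_2$. The base cases are the pairs $\bigl(\tfrac1{m+1},\tfrac1m\bigr)$, for which $\tilde u(1/k)=0^{\,k-2}$ and both identities are immediate. For the inductive step one uses that every admissible pair is a left or right Stern--Brocot child of an admissible pair of smaller denominator sum, and that passing to a child is governed by the standard-word recursion $u_{k+1}=u_k^{d_{k+1}}u_{k-1}$. The delicate point throughout is the behaviour of the final two letters under concatenation: standard words only \emph{almost} commute, $u^{(1)}u^{(2)}$ and $u^{(2)}u^{(1)}$ agreeing except in their last two symbols, and it is exactly this $01$-versus-$10$ discrepancy that produces the two different middle factors in \eqref{eq:cw1} and \eqref{eq:cw2}. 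Keeping track of whether each standard word ends in $01$ or $10$ as one ascends the tree is the part of the argument that needs care; everything else is the routine substitution of the first step.
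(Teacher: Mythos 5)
Your proof is correct, but it takes a genuinely different route from the paper's. The paper argues directly from the continued-fraction expansions: writing $r_1=[d_1+1,\dots,d_{m-1},d_m]$ (with $m$ even), $r_2=[d_1+1,\dots,d_{m-1}]$ and $r_3=[d_1+1,\dots,d_{m-1},d_m+1]$, it uses the standard-word recursion $u_{k+1}=u_k^{d_{k+1}}u_{k-1}$ to get $u_m'=u_{m-1}u_m$, which after adjusting the final two letters gives \eqref{eq:concaten1} and \eqref{eq:concaten4}; it then deduces \eqref{eq:concaten2} and \eqref{eq:concaten3} from the standard-pair fact that $u_{m-1}u_m$ and $u_mu_{m-1}$ agree except in their last two symbols, and it must treat the odd-$m$ case separately (omitted as ``similar''). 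You instead strip the common two-letter prefixes, collapsing the four identities into the two central-word identities $\tilde u_3=\tilde u_2\,01\,\tilde u_1$ and $\tilde u_3=\tilde u_1\,10\,\tilde u_2$, and you obtain both at once from the Borel--Laubie standard factorization of lower and upper Christoffel words at a mediant. Your reduction is exact (for instance $r_1=\frac13$, $r_2=\frac12$, $r_3=\frac25$ gives $\tilde u_1=0$, $\tilde u_2=\lambda$, $\tilde u_3=010$, and both identities check out), and it buys a cleaner, more symmetric argument with no parity case split: all four relations are visibly one classical fact together with its reversal. What it costs is the imported theorem: the Christoffel standard factorization is not stated in the paper's reference \cite[Chapter~2]{Loth} in the form you invoke, so you would need a supplementary citation (Borel--Laubie, or Berstel--Lauve--Reutenauer--Saliola), and you should flag that your ``slope'' convention (ones per total length rather than ones per zeros) differs from that literature -- the translation is harmless, since Farey neighbours and mediants are preserved under $p/q\mapsto p/(q-p)$, but it deserves a sentence. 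Finally, note that your fallback plan -- induction along the Stern--Brocot tree via $u_{k+1}=u_k^{d_{k+1}}u_{k-1}$ and the almost-commutation of standard pairs -- is in essence the paper's own proof, so the two approaches reconnect exactly there.
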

\begin{proof} Let the continued fraction expansion for $r_1$ be $[d_1+1,\dots, d_{m-1}, d_m]$, in which case $m$ is even and $r_2=[d_1+1,\dots, d_{m-1}]$. Then, as is well known, the continued fraction expansion of their mediant $r_3$ is $[d_1+1,\dots, d_{m-1}, d_m+1]$.

Since $m$ is even, the standard words $u_m$ and $u_m'$which correspond to $r_1$ and $r_3$ respectively, end with $10$, while $u_{m-1}$ which corresponds to $r_2$ ends with $01$. Denote $u_m=v_m10, u_{m-1}=v_{m-1}01$ and $u_m'=v_m'10$. We have $u_m = u_{m-1}^{d_m} u_{m-2}$ and $u_m' = u_{m-1}^{d_m+1} u_{m-2} = u_{m-1} u_m$, whence
\[
v_m'10=v_{m-1}01v_m10=v_{m-1}s_110.
\]
Prepending $01$ as a prefix for both sides of this equation and deleting the suffix $10$, we obtain (\ref{eq:concaten1}). Replacing the first two symbols $01$ with $10$ yields (\ref{eq:concaten4}).

To prove (\ref{eq:concaten2}), notice that $(u_{m-1}, u_m)$ is a standard pair (see \cite[Chapter~2.2.1]{Loth}), whence $u_{m-1}u_m$ and $u_mu_{m-1}$ differ only by the last two symbols (\cite[Proposition~2.2.2(iii)]{Loth}). Therefore,
\[
v_{m-1}01v_m=v_m10v_{m-1}.
\]
Prepending $10$ as a prefix for both sides of the equation, we obtain $t_2s_1=t_1t_2$, which in view of (\ref{eq:concaten4}) yields (\ref{eq:concaten2}). Replacing the first two symbols $01$ with $10$ in (\ref{eq:concaten2}) yields (\ref{eq:concaten3}).

The case of $r_1=[d_1+1,\dots, d_{m-1}]$ and $r_2=[d_1+1,\dots, d_{m-1}, d_m]$ implies that $m$ is odd and is treated similarly, so we omit the proof.
\end{proof}

\begin{cor}
\label{cor:limit corners}
We have for $r_n$ and $r$ in $\mathbb Q$ that
\[
\begin{array} {rclrcl}
\lim_{r_n \uparrow r} s(r_n) & = & s(r)^\infty   & \ \ \ \ \ \
\lim_{r_n \uparrow r} t(r_n) & = & t(r) s(r)^\infty   \\
\lim_{r_n \downarrow r} s(r_n) & = & s(r) t(r)^\infty   &
\lim_{r_n \downarrow r} t(r_n) & = & t(r)^\infty
\end{array} \]
\end{cor}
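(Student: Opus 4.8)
# Proof Proposal for Corollary (limit of corners)

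The plan is to establish all four limits by reducing them to a single mechanism: Lemma~\ref{lem:farey} lets me express $s(r_n)$ and $t(r_n)$ as concatenations of $s(r)$ and $t(r)$ when $r_n$ is obtained from $r$ by repeated mediant operations (i.e., Stern--Brocot/Farey refinement), and then I take the coordinate-wise limit. First I would observe that it suffices to prove the two left-hand limits (with $r_n \uparrow r$); the right-hand limits then follow either by the symmetric argument with the roles of the two Farey neighbours swapped, or via the substitution $r \mapsto 1-r$ together with the bar-duality $s(r)=\overline{t(1-r)}$, $t(r)=\overline{s(1-r)}$ recorded just before the Example. So the core work is the two formulas $\lim_{r_n\uparrow r} s(r_n)=s(r)^\infty$ and $\lim_{r_n\uparrow r} t(r_n)=t(r)s(r)^\infty$.

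For the approach to $r$ from below, the natural choice is the \emph{canonical} sequence $r_n \uparrow r$ along the Stern--Brocot tree: write $r=p/q$ with left Farey parent $r^- = p^-/q^-$ (so $r^- < r$ are Farey neighbours, $pq^- - p^-q = 1$), and take $r_0 = r^-$, $r_{n+1} = r_n \oplus r$. Because $r$ itself is a Farey neighbour of each $r_n$ on the right, Lemma~\ref{lem:farey} applies with the pair $(r_n, r)$ playing the role of $(r_1,r_2)$, giving recursively $s(r_{n+1}) = s(r)\,s(r_n)$ from \eqref{eq:concaten1} and $t(r_{n+1}) = t(r_n)\,t(r)$ from \eqref{eq:concaten2}. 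Unwinding these recursions yields $s(r_n) = s(r)^{n}\,s(r_0)$ and $t(r_n) = t(r_0)\,t(r)^{n}$, up to bookkeeping of the base words. Since $|s(r)^n|\to\infty$, the prefix of $s(r_n)$ agrees with $s(r)^\infty$ on ever-longer initial segments, so $s(r_n)\to s(r)^\infty$ in the product topology. For $t(r_n)$, the recursion puts the growing power of $t(r)$ \emph{after} the fixed base, so I must instead use the mixed identity \eqref{eq:concaten4}, $t_3 = t_2 s_1$: applying it with the pair $(r_n, r)$ gives $t(r_{n+1}) = t(r)\,s(r_n)$, and since $s(r_n)\to s(r)^\infty$ from the first computation, this immediately yields $t(r_n)\to t(r)\,s(r)^\infty$, exactly the claimed limit.

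The main obstacle I anticipate is ensuring that the hypotheses of Lemma~\ref{lem:farey} are genuinely met at every stage --- specifically the requirement $r_2 \le 1/2$ and the correct identification of which neighbour is ``$r_1$'' versus ``$r_2$'' (the lemma is stated asymmetrically, with $r_1 < r_2$). When approaching $r$ from below, $r$ is the \emph{larger} neighbour, so I must check that $r \le 1/2$ to invoke the lemma directly and otherwise pass through the $1-r$ duality; when approaching from above the orientation flips and the symmetric formulas \eqref{eq:concaten2}--\eqref{eq:concaten3} must be used with care to avoid swapping the roles of $s$ and $t$. A secondary subtlety is that for $r=1/2$ the continued-fraction normalization $d_n\ge 2$ breaks, so that boundary case may need to be handled separately or excluded, consistent with how $s(r),t(r)$ were defined only for the relevant range. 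Once the role-assignment is pinned down, the convergence itself is soft: it is just the statement that a fixed word repeated an unbounded number of times converges coordinate-wise to its infinite periodic extension, which requires no estimate beyond $|s(r)^n|=nq\to\infty$.
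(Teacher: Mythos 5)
Your core computation is exactly the paper's: take a Farey neighbour $r_0<r$, form the mediant sequence $r_{n+1}=r_n\oplus r$, and apply Lemma~\ref{lem:farey} to the pair $(r_n,r)$ to get $s(r_{n+1})=s(r)\,s(r_n)=s(r)^{n+1}s(r_0)$ from \eqref{eq:concaten1} and $t(r_{n+1})=t(r)\,s(r_n)$ from \eqref{eq:concaten4}, whence the coordinate-wise limits $s(r)^\infty$ and $t(r)s(r)^\infty$. Your switch from \eqref{eq:concaten2} (which only gives $t(r_0)t(r)^n$, with the growing power in the wrong place) to the mixed identity \eqref{eq:concaten4} is precisely the step the paper hides under ``similarly'', and your attention to the hypothesis $r_2\le 1/2$ of Lemma~\ref{lem:farey}, the orientation of the pair $(r_1,r_2)$, and the degenerate case $r=1/2$ is if anything more careful than the paper; the bar-duality $s(r)=\overline{t(1-r)}$, $t(r)=\overline{s(1-r)}$ is a legitimate way to dispatch the $r_n\downarrow r$ limits.

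There is, however, one genuine gap. The Corollary asserts one-sided limits, i.e.\ statements about \emph{every} sequence of rationals $r_n\uparrow r$ (resp.\ $r_n\downarrow r$), whereas your argument evaluates the limit only along the single canonical mediant sequence, which you are free to ``choose'' only if you first show the limit is sequence-independent. Nothing in your write-up rules out that some other sequence increasing to $r$ has $s(r_n)$ behaving differently or not converging at all. The paper closes exactly this gap in the first line of its proof: it observes that $r\mapsto s(r)$ and $r\mapsto t(r)$ are strictly monotonically increasing (in the lexicographic order), so the one-sided limit exists and may be computed along any convenient subsequence --- any sequence $r_n\uparrow r$ interlaces with the mediant sequence, and monotonicity sandwiches the two. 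You need to add this monotonicity observation (or an equivalent interlacing argument) for your proof to establish the statement as stated; with it, your argument coincides with the paper's.
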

Here $r_n \uparrow r$ is the one-sided limit from the left, and
    $r_n \downarrow r$ the one-sided limit from the right.

\begin{proof}
We observe that the map $r \to s(r_n)$ and $r \to t(r_n)$ are both
    strictly monotonically increasing.
Hence, it is sufficient to show this result on a subsequence of
    $r_n \to r$ from either above or below.
Let $r_0$ be a Farey neighbour of $r$.
Without loss of generality assume $r_0 < r$.
Define recursively $r_n = r \oplus r_{n-1}$.
We see that $s(r_n)  := s(r) s(r_{n-1}) = s(r)^n s(r_0) \to s(r)^\infty$
    and similarly $t(r_n) \to t(r) s(r)^\infty$.
The other cases are similar.
\end{proof}

Given $r=p/q\in(0,1)$, put $s=s(r), t=t(r)$. We know that $s^\infty$ and $t^\infty$ belong to the same $q$-cycle. Furthermore, it follows from \cite[Corollary~3.6]{SSC} that this is the only cycle in $\mathcal J(s^\infty, t^\infty)$.

\begin{rmk}Note that the notation in \cite[Corollary~3.6]{SSC} differs from the present paper. Specifically, $\alpha_r$ in \cite{SSC} stands for $s(r)^\infty$ and $\gamma_r$ for $t(r)^\infty$, where $r=p/q$. The $T$-orbit of $\alpha_r$ is denoted by $\mathcal O(p/q)$.
\end{rmk}

Let $\{s,t\}^{\om}$ denote the set of infinite words which are concatenations of $s$ and $t$ together with its shifts.
That is, \[ \{s,t\}^{\om} = \left\{
    T^k a_1 a_2 a_3 \dots | k \geq 0, a_i \in \{s,t\} \right\} \]

\begin{lemma}\label{lem:qcycles}
We have
\[
\mathcal J(st^\infty, ts^\infty)\cap (s^\infty, t^\infty)\subset\{s,t\}^{\om}\subset \mathcal J(st^\infty, ts^\infty).
\]
\end{lemma}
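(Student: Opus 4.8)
The plan is to prove the two inclusions separately, treating the right-hand one by a clean ``block–peeling'' induction and the left-hand one by reduction to an extremal statement about $\{s,t\}^{\om}$. First I record the facts I will use repeatedly. Since $s=s(r)$ begins with $01$ and $t=t(r)$ begins with $10$ by \eqref{eq:srtr}, and $s^\infty\prec t^\infty$, one checks immediately the chain
\[
s^\infty\prec st^\infty\prec ts^\infty\prec t^\infty,
\]
so that the hole $(st^\infty,ts^\infty)$ sits inside $(s^\infty,t^\infty)$. Moreover $|s|=|t|=q$ gives $T^q(s^\infty)=s^\infty$, $T^q(t^\infty)=t^\infty$, $T^q(st^\infty)=t^\infty$ and $T^q(ts^\infty)=s^\infty$. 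Finally, $\mathcal J(st^\infty,ts^\infty)$ is forward $T$-invariant and, directly from its definition as a set of shifts, so is $\{s,t\}^{\om}$.

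For the first inclusion, take $y\in\mathcal J(st^\infty,ts^\infty)\cap(s^\infty,t^\infty)$. If $y_1=0$ then $y\prec ts^\infty$, so hole-avoidance at $n=0$ forces $y\preceq st^\infty$; combined with $y\succ s^\infty$ this gives $s^\infty\prec y\preceq st^\infty$. As both bounds share the length-$q$ prefix $s$, the word $y$ must begin with the block $s$, and applying $T^q$ yields $s^\infty\prec T^qy\preceq t^\infty$ with $T^qy$ again in $\mathcal J(st^\infty,ts^\infty)$. Symmetrically, if $y_1=1$ then $ts^\infty\preceq y\prec t^\infty$ forces $y$ to begin with $t$ and $s^\infty\preceq T^qy\prec t^\infty$. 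Iterating, one peels off a block $s$ or $t$ at every step, the process stopping with $y=st^\infty$ or $y=ts^\infty$ exactly when a tail hits the endpoint $t^\infty$ or $s^\infty$. This exhibits $y$ as a concatenation of $s$'s and $t$'s, hence $y\in\{s,t\}^{\om}$.

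For the second inclusion, forward-invariance of $\mathcal J$ together with $T$-invariance of $\{s,t\}^{\om}$ reduces the claim to the single statement that $\{s,t\}^{\om}\cap(st^\infty,ts^\infty)=\varnothing$; that is, every $y\in\{s,t\}^{\om}$ with $y_1=0$ satisfies $y\preceq st^\infty$, and every $y$ with $y_1=1$ satisfies $y\succeq ts^\infty$ (the two being exchanged by the complementation $0\leftrightarrow1$, so only the first need be proved). Here I split according to whether $y$ begins at a block boundary. A boundary-aligned $0$-starting tail has the form $y=s\,y'$ with $y'\in\{s,t\}^{\om}$ boundary-aligned, and since $t\succ s$ as words the largest such $y'$ is $t^\infty$; thus $y\preceq s\,t^\infty=st^\infty$ by a short induction on blocks.

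The remaining, and hardest, case is a $0$-starting tail $y$ that begins strictly inside a block. Here I would argue by contradiction: if $y\succ st^\infty$, let $P$ be their common prefix, so $P0$ is a prefix of $st^\infty$ while $P1$ is a factor of an $\{s,t\}$-concatenation, whence $P$ is extendable by both letters within the language of factors of $\{s,t\}^{\om}$. The main obstacle is to show this cannot occur for a mid-block $0$-tail, i.e.\ that $st^\infty$ already realizes the lexicographically largest continuation available at every such right-special prefix. I expect to establish this using that $s=s(r)$ is $0$-max and $t=t(r)$ is $1$-min together with the shared-suffix structure $s=01w$, $t=10w$ from \eqref{eq:srtr} (and, where convenient, the concatenation identities of Lemma~\ref{lem:farey}), which pin down the extremal length-$q$ factors and force every mid-block $0$-tail below $s^\infty\prec st^\infty$. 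This balanced-word/right-special analysis is the crux; once it is in place, the two inclusions combine to give the stated result.
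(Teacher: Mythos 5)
Your treatment of the first inclusion (peeling off blocks with $T^q$: the chain $s^\infty\prec y\preceq st^\infty$ forces the prefix $s$, then iterate) is correct and is essentially the paper's argument, and the block-aligned half of the second inclusion is also fine. The problem is the mid-block case of $\{s,t\}^{\om}\subset\mathcal J(st^\infty,ts^\infty)$, which you correctly identify as the crux and then do not prove: ``I would argue by contradiction\dots I expect to establish this\dots'' is a plan, not an argument, and all of the actual content of the lemma sits exactly there. The paper's proof devotes its entire second half to this step. It first reduces, using $s_k=t_k$ for $3\le k\le q$ (the shared suffix coming from \eqref{eq:srtr}) together with $s\prec t$ (so the largest admissible continuation of a block-suffix is $t^\infty$), to the single inequality \eqref{eq:precs}: $s_{j+1}\dots s_q\,t^\infty\prec st^\infty$ whenever $s_{j+1}=0$. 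It then proves \eqref{eq:precs} via the $0$-max property of $s$ -- namely $s_{j+1}\dots s_q\preceq s_1\dots s_{q-j}$, where the \emph{equality} case forces $s_{q-j+1}=1$ by \cite[Lemma~5.1]{GS13}, an external fact about these words that you never invoke -- and, in that equality case, via the cyclic comparison \eqref{eq:prec2}, deduced from the $1$-min property of $t$, with separate treatments of $j=q-1$ and $j\le q-2$, plus a separate base case $q=2$. None of this machinery appears in your proposal.

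Moreover, the statement you say you expect to establish -- that every mid-block $0$-starting tail lies \emph{below} $s^\infty$ -- is false as stated, which shows the extremal analysis cannot be waved through on general right-special-prefix grounds. Writing $s=uv$, $t=vu$ with $u$ beginning with $0$ (as in \eqref{eq:concaten3}--\eqref{eq:concaten4}), the tail of $t^\infty=(vu)^\infty$ beginning immediately after the first $v$ is $u(vu)^\infty=(uv)^\infty=s^\infty$: a genuinely mid-block $0$-starting tail that \emph{equals} $s^\infty$. (Already for $q=2$: the shift of $(10)^\infty$ is $(01)^\infty$.) So equality is attained, only the non-strict bound $\preceq s^\infty\prec st^\infty$ can hold, and any correct argument must isolate and handle this equality case -- which is precisely what the paper's appeal to \cite[Lemma~5.1]{GS13} and the inequalities \eqref{eq:prec2}, \eqref{eq:prec3} accomplish. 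Until you carry out that combinatorial step, the right-hand inclusion, and hence the lemma, remains unproved.
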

\begin{proof}Suppose $x\in(s^\infty,t^\infty)\setminus (st^\infty, ts^\infty)$. Then either $x\in(s^\infty, st^\infty]$ or in $[ts^\infty, t^\infty)$. Both cases are similar, so let us assume the former. Then the dyadic expansion of $x$ begins with $s$. If $T^qx\notin(st^\infty, ts^\infty)$, then again, its dyadic expansion begins with either $s$ or $t$, etc. -- see Figure~\ref{fig:Tq}. (Note that $T^q$ acts on the dyadic expansions as the shift by $q$ symbols.) This proves $\mathcal J(st^\infty, ts^\infty)\cap (s^\infty, t^\infty)\subset\{s,t\}^{\om}$.

\begin{figure}[t]
\centering \unitlength=1.0mm
\begin{picture}(40,70)(0,0)

\thinlines

\path(-10,5)(50,5)(50,65)(-10,65)(-10,5)

\dottedline(-10,5)(50,65)
\dottedline(10,5)(10,65)

\put(-12,2){$s^\infty$}
\put(8,2){$st^\infty$}
\put(48,2){$t^\infty$}






\thicklines

\path(-10,5)(10,65)
\path(30,5)(50,65)

\thinlines

\dottedline(30,5)(30,65)

\put(28,2){$ts^\infty$}

\end{picture}

\caption{Part of the map $T^q$. }
    \label{fig:Tq}
  \end{figure}
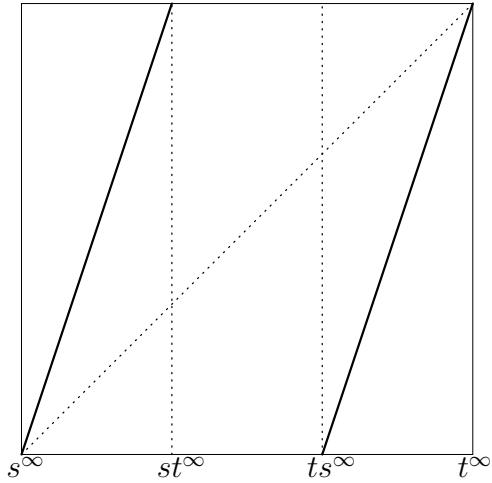

Now let us show that
\begin{equation}\label{eq:converse}
\{s,t\}^{\om}\subset \mathcal J(st^\infty, ts^\infty).
\end{equation}
Let $w\in\{s,t\}^\om$. It suffices to show that for any $j\ge0$ such that $w_{j+1}=0$ we have $(w_{j+1},w_{j+2},\dots)\prec st^\infty$ and for any $j$ such that $w_{j+1}=1$ we have $ts^\infty \prec (w_{j+1},w_{j+2},\dots)$. Both claims are similar, so we will only prove the first.

Let $s=s_1\dots s_q$ and $t=t_1\dots t_q$. We first consider the case $q=2$. Here $r=1/2, s=01$ and $t=10$. It is a simple check that the 0-max of $\{s,t\}^\om$ is $01(10)^\infty=st^\infty$, and the 1-min is $ts^\infty$. This yields (\ref{eq:converse}) for $q=2$.

If $q\ge3$, then $s_k=t_k,\ 3\le k\le q$. Since $s\prec t$, it suffices to show that
\begin{equation}\label{eq:precs}
s_{j+1}\dots s_q (t_1\dots t_q)^\infty\prec st^\infty
\end{equation}
provided $s_{j+1}=0$. Since $s$ is 0-max, we have $s_{j+1}\dots s_q\preceq s_1\dots s_{q-j}$; furthermore, if we have an equality, then $s_{q-j+1}=1$ (\cite[Lemma~5.1]{GS13}). Thus, if $s_{j+1}\dots s_q\prec s_1\dots s_{q-j}$, we are done. Otherwise, (\ref{eq:precs}) will follow from
\begin{equation}\label{eq:prec2}
t_1\dots t_q \prec s_{q-j+1}\dots s_q t_1\dots t_{q-j}.
\end{equation}
Consider first the case $j=q-1$. Here (\ref{eq:prec2}) turns into
\begin{equation}\label{eq:prec3}
t_2\dots t_q\prec t_3\dots t_qt_1.
\end{equation}
Since $t$ is 1-min and $s$ is its cyclic permutation, we have that any of the cyclic permutations of $s$ which begins with 1 is lexicographically larger than $t$.
Hence $t_1\dots t_q\prec s_2\dots s_qs_1$.
By noticing that $t_1 = s_2 = 1$, $t_k = s_k$ for $k = 3, 4, \dots q$ and
    $s_1 = 0 < t_1 = 1$ we get
    $t_2\dots t_q\prec s_3\dots s_qs_1 \prec t_3\dots t_qt_1$ as required.
Hence $t_1\dots t_q\prec s_2\dots s_qs_1$, which implies (\ref{eq:prec3}), as $s_1<t_1$.

Now assume $j\le q-2$. Since $s_k=t_k,\ 3\le k\le q$, this is equivalent to
\[
t_1\dots t_q \prec t_{q-j+1}\dots t_q t_1\dots t_{q-j},
\]
which follows from $t_{q-j+1}=s_{q-j+1}=1$ and $t$ being 1-min.
\end{proof}

\begin{cor}\label{cor:Hdim}
We have
\[
\dim_H \mathcal J(st^\infty, ts^\infty)=\frac1q.
\]
\end{cor}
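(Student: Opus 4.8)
The plan is to read off the dimension directly from the sandwich in Lemma~\ref{lem:qcycles}. Since $T$ is piecewise linear with constant slope $2$, the inclusions $\mathcal J(st^\infty,ts^\infty)\cap(s^\infty,t^\infty)\subset\{s,t\}^{\om}\subset\mathcal J(st^\infty,ts^\infty)$ reduce the computation to two tasks: (i) showing $\dim_H\{s,t\}^{\om}=\frac1q$, and (ii) controlling the part of $\mathcal J:=\mathcal J(st^\infty,ts^\infty)$ lying outside $(s^\infty,t^\infty)$. The second inclusion immediately gives $\dim_H\mathcal J\ge\dim_H\{s,t\}^{\om}$, so (i) supplies the lower bound, while (ii) is what is needed for the matching upper bound.

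For (i) I would treat $\{s,t\}^{\om}$ as an essentially self-similar set built from blocks of length $q$. For the lower bound, put on the subset $\{a_1a_2\cdots : a_i\in\{s,t\}\}$ the uniform measure $\mu$ assigning mass $2^{-m}$ to each generation-$m$ cylinder $[a_1\cdots a_m]$; this is a dyadic interval of length $2^{-mq}$ because $|s|=|t|=q$ and $s\ne t$, so the block coding is injective. Any ball of radius $\approx 2^{-n}$ meets a bounded number of generation-$\lfloor n/q\rfloor$ cylinders, whence $\mu(B)\le C\,(2^{-n})^{1/q}$, and the mass distribution principle yields $\dim_H\{s,t\}^{\om}\ge\frac1q$. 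For the upper bound, a length-$n$ prefix of a point of $\{s,t\}^{\om}$ is determined by a shift $0\le j<q$ together with $\lceil(n+j)/q\rceil$ binary block choices, so $\{s,t\}^{\om}$ is covered by at most $C\,2^{n/q}$ intervals of length $2^{-n}$, giving upper box (hence Hausdorff) dimension $\le\frac1q$. Thus $\dim_H\{s,t\}^{\om}=\frac1q$.

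For (ii), write $\mathcal J=(\mathcal J\cap(s^\infty,t^\infty))\cup(\mathcal J\cap[0,s^\infty])\cup(\mathcal J\cap[t^\infty,1])$. The first piece sits in $\{s,t\}^{\om}$ by Lemma~\ref{lem:qcycles}; the piece $\mathcal J\cap[t^\infty,1]$ is handled by an analogous argument, so it suffices to bound $\mathcal J\cap[0,s^\infty]$. Fix $x\in\mathcal J$ with $x\preceq s^\infty$. Since $\mathcal J$ is forward $T$-invariant, either the orbit of $x$ never enters $(s^\infty,t^\infty)$, in which case $x\in\mathcal J(s^\infty,t^\infty)$, or there is a least $k\ge1$ with $T^kx\in(s^\infty,t^\infty)$, and then $T^kx\in\mathcal J\cap(s^\infty,t^\infty)\subset\{s,t\}^{\om}$ by Lemma~\ref{lem:qcycles}. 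In the latter case the dyadic expansion of $x$ is $u$ followed by that of $T^kx$ with $|u|=k$, so $x$ lies in the affine image $2^{-k}\{s,t\}^{\om}+\pi(u0^\infty)$, a set of dimension $\le\frac1q$. Ranging over the (finitely many, for each fixed $k$) admissible prefixes $u$ and over $k\ge1$ produces a countable union of such sets, hence a set of dimension $\le\frac1q$ by countable stability. For the former case, $\mathcal J(s^\infty,t^\infty)$ is countable: its elements are balanced-type sequences of slope $p/q$, and for rational slope these are exactly the $q$-cycle of $s^\infty$ together with the eventually-periodic sequences absorbed into it (this is implicit in \cite[Corollary~3.6]{SSC} and the theory of balanced words), so it has dimension $0$. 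Combining the three pieces with (i) gives $\dim_H\mathcal J\le\frac1q$, completing the proof.

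The main obstacle is task (ii), and within it the claim that $\mathcal J(s^\infty,t^\infty)$ has dimension zero: one must rule out aperiodic survivors of the larger hole $(s^\infty,t^\infty)$, which is precisely where the rational-slope balanced-word structure (there are no aperiodic balanced sequences of rational slope) enters. If one prefers to avoid invoking this, the same upper bound can be obtained by a direct count of admissible length-$n$ prefixes of $\mathcal J$: reading $x$ in blocks of length $q$ and imposing the avoid-hole inequalities $T^kx\preceq st^\infty$ (when $T^kx$ begins with $0$) and $T^kx\succeq ts^\infty$ (when it begins with $1$), one checks that each block admits only a bounded number of continuations, again yielding at most $C\,2^{n/q}$ admissible prefixes and hence $\dim_H\mathcal J\le\frac1q$.
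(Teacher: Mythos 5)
Your proof is correct, but it takes a genuinely different route from the paper. The paper's proof is two lines: it cites \cite[Corollary~3.6]{SSC} for the countability of $\mathcal J(s^\infty,t^\infty)$, uses Lemma~\ref{lem:qcycles} to identify the surviving set with block concatenations of $s$ and $t$, concludes that the topological entropy of $T$ restricted to $\mathcal J(st^\infty,ts^\infty)$ is $\frac1q\log 2$, and then invokes the formula (Hausdorff dimension) $=$ (topological entropy)$/$(Lyapunov exponent), citing \cite{F}, together with the fact that the Lyapunov exponent of $T$ is $\log 2$. You instead compute the dimension from first principles: a mass distribution argument on block cylinders for the lower bound, a $C\,2^{n/q}$ covering count for the upper bound on $\{s,t\}^{\om}$, and -- crucially -- an explicit first-entrance decomposition of $\mathcal J(st^\infty,ts^\infty)$ into the countable set $\mathcal J(s^\infty,t^\infty)$ plus countably many affine copies $2^{-k}\{s,t\}^{\om}+\pi(u0^\infty)$, finished off by countable stability of Hausdorff dimension. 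What your approach buys is self-containedness and rigor at a point the paper glosses over: the paper's entropy statement silently absorbs both the countable exceptional set and the points of $\mathcal J(st^\infty,ts^\infty)$ lying outside $(s^\infty,t^\infty)$ whose orbits only later enter the block-coded region, whereas you handle these explicitly (and correctly note that countable stability is a property of Hausdorff, not box, dimension, which is exactly why the preimage copies cause no harm). What the paper's approach buys is brevity and a conceptual identification of the entropy, at the cost of leaning on the dimension--entropy formula as a black box. One caveat: your closing remark proposing a direct count of admissible prefixes of the full set $\mathcal J(st^\infty,ts^\infty)$ is sketchier -- the claim that ``each block admits only a bounded number of continuations'' needs an argument that the prefix-count of $\mathcal J(s^\infty,t^\infty)$ grows subexponentially -- but since this is offered only as an alternative to your main (complete) argument, it does not affect the validity of the proof.
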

\begin{proof}It follows from \cite[Corollary~3.6]{SSC} that $\mathcal J(s^\infty, t^\infty)$ is a countable set. When $x\in\mathcal J(st^\infty, ts^\infty)\cap[s^\infty, t^\infty]$, we know that its dyadic expansion is a concatenation of the blocks $s$ and $t$. This means that the topological entropy of $T$ restricted to $\mathcal J(st^\infty, ts^\infty)$ is $\frac1q\log2$, whence the claim follows from the well known formula $$\text{Hausdorff dimension}=\text{topological entropy}/\text{Lyapunov exponent}$$ (see, e.g., the seminal paper \cite{F}) and the fact that the Lyapunov exponent of $T$ is equal to $\log2$.
\end{proof}

\begin{rmk}It is important to state the exact logical dependence between results in \cite{SSC} and the present paper. Namely, Lemma~\ref{lem:farey} $\to$ \cite[Lemma~2.5]{SSC} $\to$
\cite[Corollary~3.6]{SSC} $\to$ Corollary~\ref{cor:Hdim}.
\end{rmk}

Now we are ready to prove the main result of this section.

\begin{thm}\label{thm:D2}
Let $s := s(r), t := t(r)$ for $r \in \mathbb{Q} \cap (0,1)$.
For all $r \in \mathbb{Q} \cap (0,1)$ we have that
    $(s^\infty, t s^\infty), (s t^\infty, t s^\infty)$ and
    $(s t^\infty, t^\infty)$ are on the boundary of $D_2$.
In particular, $(s t^\infty, t s^\infty)$ is a corner point.
Furthermore, $(s^\infty, t s^\infty)$ and $(s t^\infty, t^\infty)$
    are the limit points of corner points.
\end{thm}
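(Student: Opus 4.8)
The plan is to reduce the whole statement to a single assertion: that for \emph{every} rational $\rho=p/q$ the point $C(\rho):=\bigl(s(\rho)t(\rho)^\infty,\,t(\rho)s(\rho)^\infty\bigr)$ is a corner of $D_2$ in the sense of Definition~\ref{def:corner}. Granting this, the three claims follow at once. Applying it to the fixed $r$ gives the ``in particular'' clause that $(st^\infty,ts^\infty)=C(r)$ is a corner, and a corner lies on $\partial D_2$ since it has points of $D_2$ and of its complement arbitrarily near. As $\partial D_2$ is closed, it then suffices to display the two outer points as limits of the corners $C(\rho)$. Using Corollary~\ref{cor:limit corners}: as $\rho\uparrow r$ we have $s(\rho)\to s(r)^\infty=s^\infty$ and $t(\rho)\to t(r)s(r)^\infty=ts^\infty$, so $C(\rho)\to(s^\infty,ts^\infty)$; and as $\rho\downarrow r$ we have $s(\rho)\to s(r)t(r)^\infty=st^\infty$ and $t(\rho)\to t(r)^\infty=t^\infty$, so $C(\rho)\to(st^\infty,t^\infty)$. (In each case the finite word $s(\rho)$ or $t(\rho)$ already converges coordinatewise to the displayed infinite word, so appending the periodic tail $t(\rho)^\infty$ or $s(\rho)^\infty$ does not affect the limit.) This is precisely the asserted staircase behaviour and simultaneously shows the two outer points are boundary points.

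It remains to prove that each $C(\rho)$ is a corner, and the two halves of Definition~\ref{def:corner} are quite different. Throughout I use the elementary monotonicity of $D_2$ (the analogues of observations~\ref{ob:1},\ref{ob:2}): enlarging the hole can only enlarge $B$, and shrinking it can only shrink $B$. I also use the kneading description available on $(1/4,1/2)\times(1/2,3/4)$: since a sequence starting with $0$ has value $<1/2\le b$ and one starting with $1$ has value $\ge1/2>a$, a point $x$ lies in $\mathcal J(a,b)$ iff every shift of $x$ beginning with $0$ is $\preceq a$ and every shift beginning with $1$ is $\succeq b$. For the \emph{enlarging} half (if $a'<a$ and $b'>b$ then $(a',b')\notin D_2$), the hole $(a',b')$ strictly contains $(st^\infty,ts^\infty)$, so $\mathcal J(a',b')\subset\mathcal J(st^\infty,ts^\infty)$; by Lemma~\ref{lem:qcycles} every point of the latter lying in $[s^\infty,t^\infty]$ is a concatenation of the blocks $s$ and $t$. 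Since no nontrivial cycle can lie entirely in $[0,s^\infty]$ or $[t^\infty,1]$ (the maximal $T$-invariant subsets there are $\{0\}$ and $\{1\}$), every nontrivial cycle disjoint from the hole meets $[s^\infty,t^\infty]$, hence is such a concatenation, and as $\{s,t\}$ is a uniquely decodable code of constant block length $q$ each such cycle has period a multiple of $q$. Thus every $n$ with $q\nmid n$ is bad, $B(a',b')$ is infinite, and $(a',b')\notin D_2$.

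The hard half is the \emph{shrinking} one: if $a'>a$ \emph{or} $b'<b$ then $(a',b')\in D_2$. By monotonicity this reduces to the two boundary families $(st^\infty+\e,\,ts^\infty+\de)$ and $(st^\infty-\e,\,ts^\infty-\de)$ with small $\e,\de>0$ (the purely shrunk hole $(st^\infty+\e,ts^\infty-\de)$ is dominated by either), and the reflection $x\mapsto1-x$, which conjugates $T$ and interchanges the roles of $s$ and $t$, sends the construction for $\rho$ to that for $1-\rho$ and swaps the two families; since we treat all rationals, it suffices to handle $(st^\infty-\e,\,ts^\infty-\de)$. The key point is that pulling $b'$ strictly below $ts^\infty$ opens a sliver $[ts^\infty-\de,\,ts^\infty)$ of admissible $1$-shifts through which one can thread cycles whose period is \emph{not} a multiple of $q$. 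Concretely I would fix the hole and, for each large $n$, build an explicit $n$-cycle as a long run of $s$-blocks (which comfortably meets the $0$-shift bound $\preceq st^\infty-\e$) followed by a single length-adjusting ``defect'' block realizing the residue $n\bmod q$, arranged so that its unique $1$-shift lands in the freed sliver; admissibility is then checked from the kneading description together with the fact that $s=01w$ and $t=10w$ differ only in their first two symbols. Since a defect $1$-shift can be forced within $\de$ of $ts^\infty$ only by lengthening the surrounding run, the smallest admissible such $n$ grows as $\de\downarrow0$, but for each \emph{fixed} hole it is finite, so all large $n$ are realized and $B(a',b')$ is finite. This final construction is the main obstacle: it must produce cycles of every large period outside $q\mathbb Z$ — exactly what the corner hole forbids — and cannot be made uniform in $\e,\de$, so it genuinely requires threading the thin freed sliver by a residue-by-residue family rather than a single mixing argument.
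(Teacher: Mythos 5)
The skeleton of your argument coincides with the paper's: you prove that $C(\rho)=(s(\rho)t(\rho)^\infty,\,t(\rho)s(\rho)^\infty)$ is a corner for every rational $\rho$, obtain the two outer points as limits of these corners via Corollary~\ref{cor:limit corners}, and handle the ``enlarging'' quadrant exactly as the paper does, via Lemma~\ref{lem:qcycles} (a cycle surviving a hole containing $(st^\infty,ts^\infty)$ is a concatenation of the blocks $s$ and $t$, hence has period divisible by $q$, so $B$ is infinite). One small repair is needed in that quadrant: to see that a nontrivial surviving cycle must meet $[s^\infty,t^\infty]$, it is not enough to observe that $[0,s^\infty]$ and $[t^\infty,1]$ separately contain no nontrivial invariant sets, because a cycle could alternate between the two pieces while avoiding $[s^\infty,t^\infty]$; such a cycle would lie in $\mathcal J(s^\infty,t^\infty)$, and one needs the fact (quoted in the paper from \cite[Corollary~3.6]{SSC}) that the only nontrivial cycle there is the $q$-cycle.

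The genuine gap is the step you yourself call ``the main obstacle'', and it is the mathematical heart of the theorem. Your plan --- a long run of $s$-blocks followed by ``a single length-adjusting defect block realizing the residue $n\bmod q$'' --- never specifies the defect word, never shows that an admissible one exists in each residue class, and never carries out the lexicographic admissibility check; as written it restates what must be proved rather than proving it. The paper resolves exactly this point with an idea absent from your sketch: by Lemma~\ref{lem:farey}, specifically \eqref{eq:concaten3} and \eqref{eq:concaten4}, one writes $s=uv$, $t=vu$ with $u=s(r_1)$, $v=t(r_2)$ for the Farey parents $r_1,r_2$ of $r$, so that $j=|u|$ satisfies $\gcd(j,q)=1$. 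The defect is then always the single \emph{fixed} word $u$: the cycle $w=(ut^m)^\infty$, of length $mq+j$, avoids $(st^\infty,\,ts^\infty-\e)$ for large $m$ because, rewriting $w=s^muu\cdots$ and $w=u\,t^ms^muu\cdots$, its orbit stays within $2^{-mq}$ of, and on the correct lexicographic side of, the orbits of $s^\infty$ and of $t^ms^\infty$, both of which lie in $\mathcal J(st^\infty,ts^\infty)$ by Lemma~\ref{lem:qcycles}. The remaining residues are reached not by varying the defect but by inserting $k$ copies of $u$ separated by long runs, $W=\bigl(u(vu)^{m_1}\cdots u(vu)^{m_k}\bigr)^\infty$, whose length is $\equiv kj \pmod q$; coprimality of $j$ and $q$ makes $kj$ sweep all residues as $k$ varies. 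Without this Farey-derived splitting (or an equivalent device producing admissible cycles in every residue class modulo $q$), the claim that $(st^\infty,\,ts^\infty-\e)\in D_2$ --- and with it the corner property and the whole theorem --- remains unproved, so your proposal is an outline rather than a proof.
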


\begin{proof}
We will first show that $(s t^\infty, t s^\infty)$ is on the boundary
    of $D_2$.
Then, using Corollary~\ref{cor:limit corners} we get that
    $(s^\infty, t s^\infty)$ and
    $(s t^\infty, t^\infty)$ are also on the boundary of $D_2$
    by noting that
    \[ \lim_{r_n\uparrow r} (s(r_n) t(r_n)^\infty, t(r_n) s(r_n)^\infty) =
    (s(r)^\infty, t(r) s(r)^\infty)\] and
    \[ \lim_{r_n\downarrow r} (s(r_n) t(r_n)^\infty, t(r_n) s(r_n)^\infty) =
    (s(r) t(r)^\infty, t(r)^\infty).\]
This then implies that $(s t^\infty, t s^\infty)$ is a corner point.

Let us prove first that $(st^\infty, ts^\infty)$ is on the boundary of $D_2$. Note first that by Lemma~\ref{lem:qcycles},  $\mathcal J(st^\infty, ts^\infty)$ contains only cycles whose lengths are multiples of $q$, we have that  $(st^\infty, ts^\infty)\notin D_2$. Consequently, $(st^\infty-\e, ts^\infty+\e)\notin D_2$ for any $\e>0$.

To see for arbitrarily small $\e > 0$ that $(st^\infty + \e, t s^\infty)$ and $(st^\infty, t s^\infty -\e)$ are in $D_2$ we must show that there exists an $N$ (dependent on $\e$) such that for all $n > N$ there exists an $n$-cycle that is disjoint from $(st^\infty , t s^\infty- \e)$ and an $n$-cycle that is disjoint from $(st^\infty + \e, t s^\infty)$. We will show the first case only, the second case is symmetric.

Since $s$ and $t$ are cyclic permutations of each other, we can write $s = u v$ and $t = v u$. More precisely, (\ref{eq:concaten3}) and (\ref{eq:concaten4}) yield explicit $u$ and $v$ with $\gcd(|u|,|s|)=\gcd(|v|,|s|)=1$.
We will first show that the orbit of $w := (ut^m)^\infty$ is disjoint from $(st^\infty, t s^\infty - \e)$ for all sufficiently large $m$.

Let $q=|s|=|t|$, where $r=p/q$ and $j=|u|$. Write
\begin{eqnarray}
w & = & (u t^m)^\infty \nonumber \\
  & = & (u (vu)^m)^\infty \nonumber \\
  & = & \underbrace{(uv) (uv) \dots (uv)}_{m} u u \dots \nonumber \\
  & = & \underbrace{s s \dots s}_{m} u u \dots \label{eq:1} \\
  & = & u \underbrace{(vu) (vu) \dots (vu)}_{m}
          \underbrace{(uv) (uv) \dots (uv)}_{m} uu \dots  \nonumber \\
  & = & u \underbrace{t t \dots t}_{m} \underbrace{s s \dots s}_m u u \dots
        \label{eq:2}
\end{eqnarray}
Let as usual, $\text{dist}(x,y)=2^{-\min\{j\ge1\,\mid\, x_j\neq y_j\}}$ for any pair $x,y\in \{0,1\}^{\mathbb N}$.
Letting $x = x_1 x_2 \dots,
        y = y_1 y_2 \dots,  \in \{0,1\}^{\mathbb N}$ we see that
   $|\sum x_i/2^i - \sum y_i/2^i | \leq 2 \text{dist}(x,y)$.

By \eqref{eq:1}, we have $\text{dist}(T^i w, T^is^\infty)\le 2^{-mq}$ for all $i\le j$. Furthermore, since $w= s^m uu \dots$ and $s^\infty = s^m uv \dots$, we have $T^i w\prec T^i s^\infty$ for all $i\le j$. Lemma~\ref{lem:qcycles} yields $s^\infty \in \mathcal{J} (s t^\infty, t s^\infty)$, whence for $m$ sufficiently large the first $j$ terms in the orbit of $w$ are disjoint from $(s t^\infty, t s^\infty - \e)$.

By \eqref{eq:2}, we have $\text{dist}(T^i w, T^{i-j}(t^ms^\infty))\le 2^{-mq}$ and $T^i w\prec  T^{i-j}(t^ms^\infty)$ if $j+1\le i\le mq+j$. Again, by Lemma~\ref{lem:qcycles}, $t^m s^\infty \in \mathcal{J} (s t^\infty, t s^\infty)$, whence for $m$ sufficiently large the $j+1$-st term to the $mq$-th in the orbit of $w$ are disjoint from $(s t^\infty, t s^\infty - \e)$.

Thus, we have proved that for $m$ sufficiently large the orbit of $w= (ut^m)^\infty$ (whose length is $mq+j$) is disjoint from $(st^\infty, t s^\infty - \e)$. Now we will show for all $\ell\in\{0,1,\dots,q-1\}$ there exists a word $W$ of length $mq+\ell$ (for all $m$ sufficiently large) such that the orbit of $W$ is disjoint from $(st^\infty, t s^\infty - \e)$.

Since $j$ is coprime with $q$, for all $\ell$ there exists a $k$ such that $\ell \equiv k j \bmod q$. We now let $m_1, m_2, \dots m_k$ be sufficiently large, and distinct,
    such that each $(u (vu)^{m_i})^\infty$ is disjoint from $(st^\infty, ts^\infty - \e)$. Consider
    \[
    W := (u (vu)^{m_1} u (vu)^{m_2} \dots u (vu)^{m_k})^\infty.
    \]
The same argument as before shows that the orbit of $W$ is disjoint from $(st^\infty, t s^\infty - \e)$. Furthermore, $|W|\equiv \ell\bmod q$, which concludes the proof of the first part of the claim.

This shows that all neighbourhoods of $(s t^\infty, t s^\infty)$ have points
    in $D_2$ and points not in $D_2$, and hence it is a boundary point.

To see it is a corner point, we consider $r_n \uparrow r$ and notice that
    $(s(r_n) t(r_n)^\infty, t(r_n) s(r_n)) \to (s(r)^\infty, t(r) s(r)^\infty)$
    must also be on the boundary.
Similarly $(s t^\infty, t^\infty)$ is a boundary point.
This implies that $(s t^\infty, t s^\infty)$ is a corner point.
\end{proof}

For a sketch of $D_2$ see Figure~\ref{fig:D2_labelled}.
For the purposes of that diagram,
    let $p(r) = (s(r) t(r)^\infty, t(r) s(r)^\infty)$ and
    $p'(r) = (s(r)^\infty, t(r) s(r)^\infty)$.
We notice that visually $p(n/(2n+1)) \to p'(1/2)$, as proven theoretically
    in Theorem~\ref{thm:D2}.

\begin{cor}If $b-a<\frac16$, then $(a,b)\in D_2$, and the constant $\frac16$ cannot be improved.
If $(a,b) \in D_2$ then $b- a < \frac{1}{4}$ and the constant $\frac{1}{4}$
    cannot be improved.
\end{cor}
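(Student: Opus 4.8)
The plan is to derive both bounds from Theorem~\ref{thm:D2}, which tells us exactly where the boundary of $D_2$ lives in terms of the corner points $(s(r)t(r)^\infty, t(r)s(r)^\infty)$ and their limits. The key quantitative input is that the widths $b-a$ of the intervals $(s^\infty, ts^\infty)$, $(st^\infty, ts^\infty)$ and $(st^\infty, t^\infty)$, as $r=p/q$ ranges over $\mathbb{Q}\cap(0,1)$, control membership in $D_2$ via observations analogous to \eqref{ob:1} and \eqref{ob:2}: if $(a,b)\in D_2$ then any wider interval is not (finiteness of bad $n$ only improves as the hole shrinks), and conversely shrinking keeps us in $D_2$. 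So the first step is to recast both assertions as statements about the supremum and infimum of the relevant boundary widths.

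For the sufficiency direction ``$b-a<\tfrac16\Rightarrow(a,b)\in D_2$'', I would compute the width of the corner interval $(st^\infty, ts^\infty)$ and show that, as $r$ ranges over all rationals, these widths are bounded below by $\tfrac16$; equivalently, that no corner interval is narrower than $\tfrac16$ and that $\tfrac16$ is attained (or approached) at a specific $r$. A natural candidate is $r=1/2$, where $s=01$, $t=10$, so $st^\infty=01(10)^\infty$ and $ts^\infty=10(01)^\infty$; a direct dyadic computation should give the gap $\tfrac16$ there. Since any $(a,b)$ with $b-a<\tfrac16$ can be nudged inside a corner interval (using that every point of $D_2$ has wider complementary neighbours outside), this shows $(a,b)\in D_2$. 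The sharpness is exhibited by the single corner $(st^\infty,ts^\infty)$ of width exactly $\tfrac16$, which itself lies outside $D_2$ by the first paragraph of the proof of Theorem~\ref{thm:D2}.

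For the necessity direction ``$(a,b)\in D_2\Rightarrow b-a<\tfrac14$'', I would instead examine the limit-of-corners intervals $(s^\infty, ts^\infty)$ and $(st^\infty, t^\infty)$, which form the left (resp.\ upper) ends of the horizontal (resp.\ vertical) boundary segments. The relevant extremal width is $\tfrac14$, and I expect it to arise as a supremum along a sequence $p(n/(2n+1))\to p'(1/2)$, exactly the convergence already flagged after Theorem~\ref{thm:D2}; computing $\sup_r\bigl(\pi(ts^\infty)-\pi(s^\infty)\bigr)$ and showing it equals $\tfrac14$ but is not attained gives the strict inequality together with sharpness. Here one uses Corollary~\ref{cor:limit corners} to evaluate the limiting endpoints at $r=1/2$.

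The main obstacle I anticipate is the endpoint bookkeeping in the width computations: because $s^\infty, st^\infty, ts^\infty, t^\infty$ are all eventually periodic dyadic expansions sharing long common prefixes, computing $\pi(ts^\infty)-\pi(s^\infty)$ and $\pi(ts^\infty)-\pi(st^\infty)$ requires care with the remark that $\pi(u)=\pi(v)$ can fail lexicographic strictness only for $w01^\infty$ versus $w10^\infty$. Establishing that $\tfrac16$ is a genuine minimum and $\tfrac14$ a genuine (unattained) supremum over \emph{all} $r$, rather than merely at the sampled points, is the part needing a real monotonicity argument; I would reduce it to the Farey/mediant structure of Lemma~\ref{lem:farey}, showing the widths vary monotonically along Farey subdivisions so that the extrema occur in the limit $r\to1/2$.
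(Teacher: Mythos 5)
Your strategy coincides with the paper's: both constants are read off from the widths of the boundary objects supplied by Theorem~\ref{thm:D2} --- the corners $(st^\infty, ts^\infty)$ for $\frac16$ and the left endpoints $(s^\infty, ts^\infty)$ of the horizontal segments for $\frac14$ --- and your extremal witness ($r=\tfrac12$, i.e.\ the corner $\bigl(\tfrac5{12},\tfrac7{12}\bigr)$) is exactly the paper's. However, two of the mechanisms you anticipate are miscalibrated, and one of them is a genuine gap.

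The harmless one first: no Farey/mediant monotonicity argument is needed for the lower constant. By (\ref{eq:srtr}), $s=01w_1\dots w_{q-2}$ and $t=10w_1\dots w_{q-2}$ differ only in their first two letters, so $t-s=\frac14$ exactly and
\[
ts^\infty-st^\infty=(t-s)-2^{-q}\bigl(t^\infty-s^\infty\bigr)=\frac14-\frac1{4(2^q-1)}=\frac{2^q-2}{4(2^q-1)},
\]
a closed form depending only on $q$, increasing in $q$, and minimized at $q=2$; so the minimum $\frac16$ is attained \emph{at} $r=\tfrac12$, not in a limit $r\to\tfrac12$. The genuine gap is in the necessity direction. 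The same identity $t-s=\frac14$ gives $ts^\infty-s^\infty=t-s=\frac14$ for \emph{every} $r$: the quantity whose supremum over $r$ you propose to compute is constant, hence attained at every $r$ (the paper remarks that the maximum occurs in infinitely many places). Consequently your plan of ``showing it equals $\frac14$ but is not attained'' cannot succeed as stated --- non-attainment is not a property of the family of boundary widths at all, and there is no monotone approach along $p(n/(2n+1))\to p'(1/2)$ to exploit. What the strict inequality actually requires is that every width-$\frac14$ point of $\partial D_2$ fails to lie in $D_2$: for the plateau endpoints $(s^\infty,ts^\infty)$ this holds because $\mathcal J(s^\infty,ts^\infty)$ contains only the $q$-cycle, so $B$ is infinite there; for the exceptional points $(a,a+\tfrac14)$ with $a\in\mathcal E$ it holds because $\mathcal J$ contains no cycles at all. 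Combined with the staircase description of Proposition~\ref{prop:Devil} (every point of $D_2$ lies strictly inside the staircase), this gives $b-a<\frac14$ on all of $D_2$, and sharpness then follows because the corner widths $\frac{2^q-2}{4(2^q-1)}$ tend to $\frac14$ as $q\to\infty$, so $D_2$ contains points (just inside those corners) of width arbitrarily close to $\frac14$.
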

\begin{proof}
As usual, let $t = t(r)$ and $s = s(r)$ for some $r$.
We have
\begin{align*}
\inf \{b-a : (a,b)\in D_2\} &= \min \{b-a : (a,b)\ \text{is a corner of}\ D_2\} \\
&=ts^\infty - st^\infty = ts^\infty - s^\infty - (st^\infty - s^\infty)\\
&= t - s -2^{-q}(t^\infty - s^\infty) = \frac14 - 2^{-q}(t^\infty - s^\infty)\\
&= \frac14 - \frac1{4(2^q-1)} = \frac{2^q-2}{4(2^q-1)},
\end{align*}
and its minimum is attained at $q=2$ and is equal to $\frac16$. Clearly, $\frac16$ cannot be improved, in view of $\left(\frac5{12},\frac7{12}\right)$ being a corner of $D_2$.

We similarly have
\begin{align*}
\sup \{b-a : (a,b)\in D_2\}
&=\max\{b-a: (a,b)\ \text{is the left endpoint of a horizontal line}\}\\
&=ts^\infty - s^\infty  = t - s  \\
&= \frac14.
\end{align*}
\end{proof}
It is interesting to note that the minimum occurs in only one location
    whereas the maximum occurs in infinitely many places.

\begin{rmk}
Combined with the results of \cite{GS13}, we now have four sharp constants $c_0=\frac14,\ c_1=\frac14\prod_{n=1}^\infty \bigl(1-2^{-2^n}\bigr)\approx 0.175092,\ c_2=\frac16$ and $c_3=\frac2{15}$ such that if $b-a<c_j$, then $(a,b)\in D_j$ for $j=0,1,2,3$.
\end{rmk}

Put for $\frac14\le a\le \frac12$,
\[
\varkappa(a)=\sup\{b: \mathcal J(a,b)\ \text{contains infinitely many cycles}\}.
\]

\begin{prop}\label{prop:Devil}
We have
\[
\left\{(a,b)\in \partial D_2 : \frac14 < a \le \frac1{2}\right\}
    = \mathrm{cl}\left(
      \left\{(a,\varkappa(a)) : \frac14 < a \le \frac1{2}\right\}
     \cup \left\{(\varkappa(a), a) : \frac14 < a \le \frac1{2}\right\}\right),
\]
 where $\varkappa(a)$
\begin{enumerate}
\item is non-decreasing;
\item is constant almost everywhere;
\item has jump discontinuities at $a = s(r) t(r)^\infty$ for every
    $r \in \mathbb{Q} \cap (0,1)$.
\end{enumerate}
\end{prop}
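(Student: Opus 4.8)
The plan is to deduce the whole statement from the explicit corner structure of $\partial D_2$ provided by Theorem~\ref{thm:D2}, reading off the three properties of $\varkappa$ from the resulting staircase.

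First I would identify $\varkappa$ with the upper envelope of $D_2$. If $(a,b)\in D_2$ then $B(a,b)$ is finite, so for every large $n$ there is an $n$-cycle disjoint from $(a,b)$; these are infinitely many distinct cycles, whence $\sup\{b:(a,b)\in D_2\}\le\varkappa(a)$. For the opposite direction I would invoke Lemma~\ref{lem:qcycles} and Corollary~\ref{cor:Hdim}: each maximal hole $(s(r)t(r)^\infty,\,t(r)s(r)^\infty)$ contains the full set $\{s,t\}^{\om}$ and hence infinitely many cycles, and this persists under shrinking the hole. Together with Theorem~\ref{thm:D2}, which already places the corner $(s(r)t(r)^\infty,t(r)s(r)^\infty)$ and the limit-of-corner points $(s(r)^\infty,t(r)s(r)^\infty)$ and $(s(r)t(r)^\infty,t(r)^\infty)$ on $\partial D_2$, this gives the closed formula
\[
\varkappa(a)=\sup\bigl\{\,t(r)s(r)^\infty : s(r)t(r)^\infty\le a,\ r\in\mathbb Q\cap(0,1)\,\bigr\}.
\]
The graph $\{(a,\varkappa(a))\}$ traces the upper part of $\partial D_2$; the remaining (lower) part in the strip $\tfrac14<a\le\tfrac12$ is its image under the complementation symmetry $(a,b)\mapsto(1-b,1-a)$ of $T$, which preserves $D_2$, and this is the origin of the second family of boundary points. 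Passing to closures inserts the vertical risers at the jumps of $\varkappa$, which yields the asserted set equality.

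Property (i) is then immediate, since enlarging $a$ only enlarges the index set in the displayed supremum (equivalently, shrinking the left end of the hole can only add disjoint cycles). For property (iii), at $a_0=s(r)t(r)^\infty$ the supremum first sees the term $t(r)s(r)^\infty$, so $\varkappa(a_0)=t(r)s(r)^\infty$, while Corollary~\ref{cor:limit corners} shows that the corners with $r_n\downarrow r$ push $\varkappa(a_0^+)$ up to $t(r)^\infty$; since
\[
t(r)^\infty-t(r)s(r)^\infty=2^{-q}\bigl(t(r)^\infty-s(r)^\infty\bigr)=\frac{1}{4(2^q-1)}>0,
\]
this is a genuine jump. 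I would check there are no other jumps by observing that on each interval $[s(r)^\infty,\,s(r)t(r)^\infty]$ the segment joining the two boundary points at height $t(r)s(r)^\infty$ is a flat piece of $\partial D_2$, so $\varkappa\equiv t(r)s(r)^\infty$ there.

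The crux is property (ii). The intervals of constancy are exactly the treads $[s(r)^\infty,\,s(r)t(r)^\infty]$; they are pairwise disjoint (distinct rationals sit at distinct heights and $\varkappa$ is monotone) and contained in $(\tfrac14,\tfrac12)$, and each has length $2^{-q}(t(r)^\infty-s(r)^\infty)=\tfrac{1}{4(2^q-1)}$. Writing $\phi$ for Euler's totient and summing over $r=p/q$ in lowest terms,
\[
\sum_{q\ge2}\frac{\phi(q)}{4(2^q-1)}=\frac14\Bigl(\sum_{q\ge1}\frac{\phi(q)}{2^q-1}-1\Bigr)=\frac14\bigl(2-1\bigr)=\frac14,
\]
where $\sum_{q\ge1}\phi(q)/(2^q-1)=\sum_{n\ge1}n\,2^{-n}=2$ follows from $\sum_{q\mid n}\phi(q)=n$. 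Since $|(\tfrac14,\tfrac12)|=\tfrac14$, the treads exhaust the interval up to a set of measure zero, so $\varkappa$ is constant almost everywhere. I expect the main obstacle to be exactly this step: establishing rigorously that the treads are the maximal flat pieces of $\partial D_2$ (no corner hides inside one), for which the Farey/standard-word machinery of Lemma~\ref{lem:farey} and Corollary~\ref{cor:limit corners} is needed; the fact that the tread lengths already sum to the full $\tfrac14$ is what simultaneously rules out any continuous (Cantor-type) increase and confirms that the jumps in (iii) carry the entire variation $\tfrac34-\tfrac12$ of $\varkappa$.
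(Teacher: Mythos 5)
Your handling of items (i) and (iii) tracks the paper's proof (monotonicity is immediate; the jump comes from Theorem~\ref{thm:D2} and Corollary~\ref{cor:limit corners}), but the core of your argument --- the identification of the graph of $\varkappa$ with the upper boundary of $D_2$, and in particular the claim that $\varkappa\equiv t(r)s(r)^\infty$ on the whole tread $[s(r)^\infty, s(r)t(r)^\infty]$ --- has a genuine gap, precisely at the step you flag as the main obstacle. Your mechanism is that $\{s,t\}^{\om}\subset\mathcal J(st^\infty,ts^\infty)$ and that this ``persists under shrinking the hole''. That persistence only applies to holes \emph{contained in} $(st^\infty,ts^\infty)$, i.e.\ it controls $\varkappa(a)$ only for $a\ge st^\infty$. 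For $a$ in the left portion of the tread, say $a\in[s^\infty,sts^\infty]$, the hole $(a,ts^\infty-\e)$ is not contained in the maximal hole, and the concatenation cycles are in fact useless there: every cycle in $\{s,t\}^{\om}$ other than the $q$-cycle contains (cyclically) an $s$-block followed by a $t$-block, hence has a shift beginning with $st$; such a shift lies in $(sts^\infty, st^\infty]$, which is inside the hole $(a,ts^\infty-\e)$ whenever $a\le sts^\infty$ and $\e<ts^\infty-st^\infty$. (This matches the paper's own later observation that $\mathcal J(a,ts^\infty)=\mathcal J(s^\infty,t^\infty)$ for $a\in[s^\infty,sts^\infty]$, i.e.\ only the $q$-cycle survives there.) So on this part of each tread your supremum formula, the constancy of $\varkappa$, the disjointness of the treads (your argument for which presupposes that constancy), and hence the measure count behind (ii), are all unsupported. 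The witnesses must be cycles of length $\not\equiv 0 \bmod q$ --- e.g.\ the $(ut^m)^\infty$ family from the proof of Theorem~\ref{thm:D2} --- and to run them for all $a\ge s^\infty$ rather than only $a\ge st^\infty$ one needs the strengthened estimate that all of their orbit points beginning with $0$ lie below $s^\infty$ (not merely below $st^\infty$, which is all that proof records); this requires a further appeal to the $0$-max property of $s$ and is not available off the shelf.

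Where your route genuinely differs from the paper is item (ii), and there your idea is attractive. The paper computes nothing: it identifies the complement of the treads in $(1/4,5/12)$ with (essentially) the set $\mathcal S$ of \cite{SSC} and quotes that $\mathcal S$ has zero measure (indeed zero Hausdorff dimension, \cite[Corollary~3.12]{SSC}). Your alternative --- each tread of $r=p/q$ has length $\frac1{4(2^q-1)}$, there are $\phi(q)$ of them, and $\sum_{q\ge2}\phi(q)/\bigl(4(2^q-1)\bigr)=\frac14=\bigl|\bigl(\frac14,\frac12\bigr)\bigr|$ via $\sum_{q\ge1}\phi(q)/(2^q-1)=\sum_{n\ge1}n2^{-n}=2$ --- is correct arithmetic, and, granted the tread structure and pairwise disjointness, it yields (ii) in a self-contained, more elementary way than the citation; that is a real improvement in spirit. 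Two smaller corrections: passing to closures does not ``insert the vertical risers'' --- the closure of a graph with a jump contains only the two endpoints of the jump, never the segment between them; the risers enter the right-hand side as images of the treads (for the complementary ratio $1-r$) under the symmetry $(a,b)\mapsto(1-b,1-a)$, i.e.\ through the second family of boundary points, which you correctly recognize as originating from that symmetry rather than from the closure operation.
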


\begin{rmk}This is a kind of Devil's staircase.
The traditional Devil's staircase is continuous non-decreasing function
   that is constant almost everywhere.
Here the first restriction of continuity is relaxed,
    allowing instead jump discontinuities at $t(r) s(r)^\infty$.
    \end{rmk}

\begin{proof}
Item~(i) is obvious from the definition of $\varkappa$; item~(iii) follows from the fact that $(s(r)t(r)^\infty, t(r)s(r)^\infty)$ and
$(s(r)t(r)^\infty, t(r)^\infty)$ are both on the boundary of $D_2$ (see Theorem~\ref{thm:D2}), so we have a jump. Let us prove (ii).

By the above, we may define $\varkappa$ at $s(r) t(r)^\infty$ either as $t(r) s(r)^\infty$ or $t(r)^\infty$ as
    both will give the same result in the closure.
Following \cite{SSC}, we introduce the set $\mathcal S$ which is defined as a union of the points in $(1/4, 5/12)$ whose dyadic expansion is of the form $01w$, where $w$ is the characteristic word for some irrational $\gamma\in(0,1/2)$ (an uncountable set) and a countable set $\bigcup_{r\in(0,1/2)\cap\mathbb Q} \{s(r)^\infty, s(r)t(r)^\infty\}$. This set is related to the exceptional set $\mathcal E$ for our 
staircase in the following way (see \cite[Section~3]{SSC}):
\[
\left(\frac14, \frac5{12}\right)\setminus \bigcup_{r\in(0,1/2)\cap\mathbb Q} (s(r)^\infty, s(r)t(r)^\infty) = \mathcal S\cap \left(\frac14, \frac5{12}\right).
\]
Consequently,
\begin{align*}
\mathcal E:&=\left(\frac14, \frac5{12}\right)\setminus \bigcup_{r\in(0,1/2)\cap\mathbb Q} [s(r)^\infty, s(r)t(r)^\infty] \\
&= \Bigl(\mathcal S\setminus \bigcup_{r\in(0,1/2)\cap\mathbb Q} \{s(r)^\infty, s(r)t(r)^\infty\}\Bigr)\cap \left(\frac14, \frac5{12}\right).
\end{align*}
The result now follows from the fact that $\mathcal S$ has zero measure. (In fact, even zero Hausdorff dimension -- see \cite[Corollary~3.12]{SSC}.)
\end{proof}

\begin{figure}
\includegraphics[width=250pt,height=313pt]{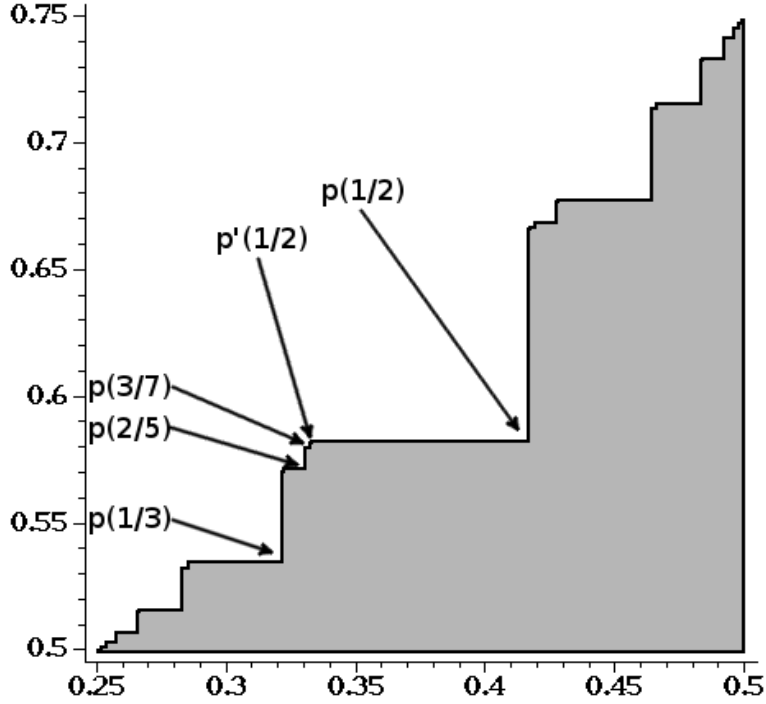}
\caption{The set $D_2$}
\label{fig:D2_labelled}
\end{figure}

Since the length of the plateau given by $r=p/q$ is of order $\frac14\cdot 2^{-q}$, one can say colloquially that the exceptional set corresponds to the case $q\to+\infty$.

\begin{cor}The set $D_2$ is open. Consequently, $\partial D_2\cap\partial D_3=\varnothing$.
\end{cor}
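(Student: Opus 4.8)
The plan is to derive openness of $D_2$ from the boundary description already in hand, and then read off $\partial D_2\cap\partial D_3=\varnothing$ as a soft topological consequence. First I would record the monotonicity of $B$: if one interval is contained in another, $I\subseteq J$, then every cycle disjoint from $J$ is a fortiori disjoint from $I$, so $B(I)\subseteq B(J)$. In particular $D_2$ is downward-closed under interval inclusion (the analogue for $D_2$ of observation~\ref{ob:2}). Since a set is open exactly when it is disjoint from its own boundary, it therefore suffices to prove that every point of $\partial D_2$ has $B(a,b)$ infinite, i.e.\ lies outside $D_2$.

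The boundary points lying on an actual segment are handled purely by this monotonicity. By Theorem~\ref{thm:D2} the segments are the horizontal ones at height $b=t(r)s(r)^\infty$ over $a\in[s(r)^\infty, s(r)t(r)^\infty]$ and the vertical ones at $a=s(r)t(r)^\infty$ over $b\in[t(r)s(r)^\infty, t(r)^\infty]$, one pair per rational $r=p/q$. Using $s(r)^\infty\prec s(r)t(r)^\infty$ and comparing endpoints, any such boundary point, read as an open interval, contains the corner interval $(s(r)t(r)^\infty, t(r)s(r)^\infty)$. By Lemma~\ref{lem:qcycles} the only cycles in $\mathcal J(s(r)t(r)^\infty, t(r)s(r)^\infty)$ have length divisible by $q$, so that corner interval has infinite $B$; by monotonicity the boundary point does too, hence it is not in $D_2$.

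The step I expect to be the main obstacle is the accumulation points of the staircase, namely the boundary points lying on no single segment. These are exactly the exceptional points of Proposition~\ref{prop:Devil}, where $a\in\mathcal S$ is of Sturmian type; such a point contains \emph{no} corner interval (approaching $r$ from either side fails the endpoint comparison), so the previous argument does not apply. The clean way around this is to use Proposition~\ref{prop:Devil} to identify $D_2\cap\{1/4<a\le1/2\}$ with the strict subgraph $\{(a,b):b<\varkappa(a)\}$, and to observe that $\varkappa$ is non-decreasing with all its discontinuities occurring as jumps \emph{to the right} at the rational values $a=s(r)t(r)^\infty$ (the value at such a jump equals the left limit $t(r)s(r)^\infty$, as the vertical-segment computation shows). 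Hence $\varkappa$ is left-continuous, so lower semicontinuous, and the strict subgraph of a lower semicontinuous function is open; simultaneously the exceptional points lie \emph{on} the graph $b=\varkappa(a)$ rather than strictly below it, so they are excluded from $D_2$. The delicate point inside this is the continuity of $\varkappa$ at the irrational exceptional parameters, where one must know that $\mathcal J$ is governed by a Sturmian subshift carrying no periodic orbit (so every $n\ge3$ is bad and $B$ is genuinely infinite there); the reflected graph $\{(\varkappa(a),a)\}$ covers the symmetric range $a\ge\tfrac12$.

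Finally, the ``consequently'' clause is immediate once openness is known. On the square $(1/4,1/2)\times(1/2,3/4)$, which is where $\partial D_2$ lives, we have $D_3\subseteq D_2$ because $B(a,b)=\varnothing$ is in particular finite. As $D_3$ is closed, this gives $\partial D_3\subseteq D_3\subseteq D_2$; and since $D_2$ is open it is disjoint from $\partial D_2$. Therefore $\partial D_3\cap\partial D_2=\varnothing$.
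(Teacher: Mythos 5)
Your proposal is correct in substance, and its working core is exactly the paper's proof: (i) monotonicity of $B$ under inclusion of intervals; (ii) every segment point of $\partial D_2$, read as a hole, contains the corner interval $(s(r)t(r)^\infty, t(r)s(r)^\infty)$, whose $B$ is infinite by Lemma~\ref{lem:qcycles}; (iii) at the exceptional (Sturmian) boundary points the characteristic word is aperiodic, so $\mathcal J(a,b)$ contains no cycles at all and $B(a,b)=\BN_3$; (iv) the ``consequently'' clause follows softly from $D_3\subset D_2$ with $D_3$ closed and $D_2$ open. The paper does precisely (i)--(iv) and nothing else. Where you diverge is the lower-semicontinuity superstructure, and you should be aware that this layer is both redundant and the one shaky step in your write-up. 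It is redundant because once (ii) and (iii) exclude every point of $\partial D_2$ from $D_2$, openness follows immediately from your own reduction (``open iff disjoint from its boundary''); no subgraph description is needed. It is shaky because the identification $D_2\cap\{1/4<a\le 1/2\}=\{(a,b):b<\varkappa(a)\}$ is not what Proposition~\ref{prop:Devil} asserts (that proposition describes $\partial D_2$, not the fibers of $D_2$), and with the paper's actual definition $\varkappa(a)=\sup\{b:\mathcal J(a,b)\ \text{contains infinitely many cycles}\}$ it is false at the jump points $a=s(r)t(r)^\infty$: ``infinitely many cycles'' is a $D_1$-type condition, not the $D_2$ condition of cofinitely many lengths, and for instance the cycles of $(t(r)^k s(r))^\infty$, $k\ge1$, all avoid $(s(r)t(r)^\infty,b)$ for some fixed $b$ strictly above $t(r)s(r)^\infty$, so the supremum strictly exceeds your claimed left limit (the paper itself flags this by saying $\varkappa$ may be defined at these points ``either as $t(r)s(r)^\infty$ or $t(r)^\infty$''). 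Your repair -- redefining $\varkappa$ at jumps as the left limit so that it becomes left-continuous, hence lower semicontinuous -- works, but then proving the subgraph identity requires exactly the corner-density and boundary-exclusion arguments you have already made, so the lsc machinery buys nothing. In short: keep your first, second and final paragraphs, replace the third by the one-line aperiodicity argument (which you in fact state inside it), and you have reproduced the paper's proof.
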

\begin{proof}Fix $r$ and denote $s=s(r), t=t(r)$. As mentioned in the proof of Theorem~\ref{thm:D2}, $(st^\infty, ts^\infty)\notin D_2$, whence the same is true for the plateaus, i.e., $([s^\infty, st^\infty]\times \{ts^\infty\})\cap D_2=\varnothing$ and $(\{st^\infty\}\times [ts^\infty,t^\infty])\cap D_2=\varnothing$ otherwise. As for the exceptional set, it is known that for any given irrational $\gamma\in (0,1/2)$ its characteristic word is aperiodic (see, e.g., \cite[Chapter~2]{Loth}), whence this set cannot contain any cycles.

The second claim is a direct consequence of $D_2$ being open, $D_3$ being closed and $D_3\subset D_2$.
\end{proof}

\begin{rmk}Figure~\ref{fig:D0123} suggests that the two closest points on the boundaries of $D_2$ and $D_3$ are the anti-corner  $(10/31, 8/15)$ of $D_3$ and the corner $(9/28, 15/28)$ of $D_2$, whence the distance between $\partial D_2$ and $\partial D_3$ is equal to $\frac{\sqrt{1186}}{13020}\approx 0.002645$. We leave this a conjecture for the interested reader.
\end{rmk}

\begin{rmk}\label{rmk1}
It is interesting to compare the boundaries of $D_0, D_1$ and $D_2$. It follows from \cite[Proposition~2.6]{GS13} that $\partial D_0\cap(1/4,5/12)$ is also the graph of a function of $a$, denoted by $\phi$, which has exactly the same plateau regions as $\varkappa$. However, $\phi(a)\equiv t(r)^\infty$ on $[s(r)^\infty, s(r)t(r)^\infty]$, whereas, as we know, $\varkappa(a)\equiv t(r)s(r)^\infty$ on the same segment, which is strictly less. As $q$ grows, these values tend to the same limit, whence $\phi$ and $\varkappa$ coincide on $\mathcal E$.

For $D_1$ the corresponding function, $\chi$, is also a kind of Devil's staircase on $(1/4, 5/12)$, however, it has a significantly more complicated set of plateau regions. Nonetheless, it follows from \cite[Theorem~2.13]{GS13} that $\chi(a)\equiv \varkappa(a)$ on $[s(r)^\infty, s(r)t(r)s(r)^\infty]$. In particular,
\[
\partial D_0\cap \partial D_1 \cap \partial D_2= \left\{\Bigl(a,a+\frac14\Bigr) : a\in\mathcal E\right\}.
\]
See Figure~\ref{fig:D0123} for hints of more details.
\end{rmk}

\begin{figure}
\includegraphics[width=250pt,height=313pt]{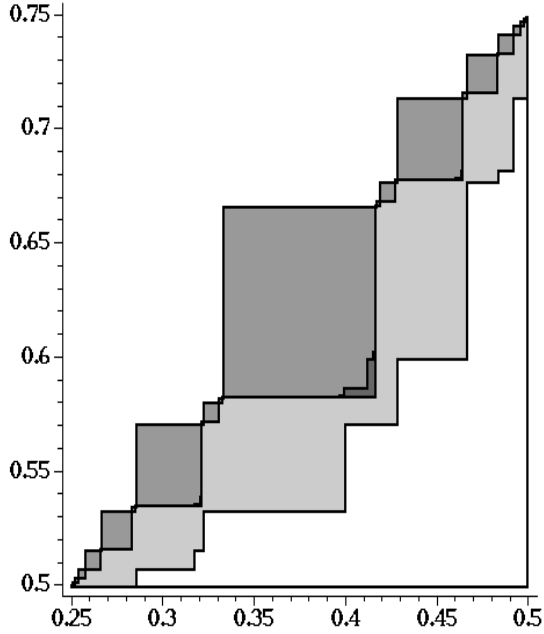}
\caption{The sets
     $D_3$ (lower white region),
     $D_2$ (white + light grey),
     $D_1$ (white + light grey + dark grey), and
     $D_0$ (white + light grey + dark grey + grey)
}
\label{fig:D0123}
\end{figure}

Finally, we would like to describe all possibilities for a ``final stretch'', i.e., all possible sequences in $B(a,b)$ when we descend from $\partial D_2$ towards $\partial D_3$. By definition, when $(a,b)\in D_2$, we have that $B(a,b)$ is finite, which means that $\mathcal J(a,b)$ gains all cycles of all lengths, except possibly a finite number of them, immediately after moving away from the boundary of $D_2$. Thus, if we ignore this finite number of cycles (which we will), it suffices to study $B(a,b)$ for $(a,b)\in\partial D_2$ to determine which cycle lengths we already have on $\partial D_2$.

Put $\BN_L=\{L, L+1, L+2,\dots\}$ for any $L\ge 3$. Note first that if $(a,b)$ is in the exceptional set $\mathcal E$, then $B(a,b)=\BN_3$, since $\mathcal J(a,b)$ does not contain any cycles.

Assume first that $(a,b)$ is on a horizontal plateau $[s^\infty,st^\infty]\times\{ts^\infty\}$ for some $r=p/q$ and $s=s(r), t=t(r)$. If $a=s^\infty$, then, as we know, $\mathcal J(a,b)$ contains only a $q$-cycle; in fact, the same is true for all $a\in [s^\infty, sts^\infty]$, since for any of those there exists $k\ge1$ such that $T^{kq}(a)\in[st^\infty, ts^\infty]$ (see Figure~\ref{fig:Tq}), which implies $\mathcal J(a,b)=\mathcal J(s^\infty, t^\infty)$.

Now assume $a\in(sts^\infty, st^\infty]$. There exists $N$ such that $a>(sts^{n-2})^\infty$ for all $n\ge N$. We claim that if $a>(sts^{n-2})^\infty$, then the orbit of $(sts^{n-2})^\infty$ is contained in $\mathcal J((sts^{n-2})^\infty, ts^\infty)$, which follows from $(sts^{n-2})^\infty$ being a 0-max -- a claim which is proved in a way similar to the proof of (\ref{eq:converse}) (using (\ref{eq:precs})), so we leave it to the interested reader.

This implies that $\mathcal J(a,b)$ contains cycles of all sufficiently large lengths which are multiples of $q$. In view of Lemma~\ref{lem:qcycles}, $\mathcal J(a,b)$ does not contain any cycle of length $qn+j$ for $j\neq0$. The case of vertical plateaus is analogous, so we omit it.

Thus, we have two essentially different possibilities for a ``final stretch'':

\begin{enumerate}
\item $B(a,b)\cap \BN_L=\BN_L$ for some $L\ge3$;
\item $B(a,b)\cap \BN_L = (\BN_L\setminus q\BN)$ for some $L\ge3$ and some $q\ge2$.
\end{enumerate}

The classical Sharkovski\u{\i} order corresponds to the second case with $q=2$.

\nocite{GlendinningSidorov14}

\end{document}